\theoremstyle{plain}
\newtheorem{theorem}{Theorem}[section]
\newtheorem{proposition}[theorem]{Proposition}
\newtheorem{lemma}[theorem]{Lemma}
\theoremstyle{definition}
\newtheorem{remark}[theorem]{Remark}
\newtheorem{definition}[theorem]{Definition}
\numberwithin{equation}{section}
\newtheorem*{assumptions*}{\assumptionnumber}
\providecommand{\assumptionnumber}{}
\def\cB{\mathcal{B}}
\def\cF{\mathcal{F}}
\def\cN{\mathcal{N}}
\def\cT{\mathcal{T}}
\def\E{\mathbb E}
\def\P{\mathbb P}
\newcommand{\R}{\mathbb{R}}
\newcommand{\N}{\mathbb{N}}
\newcommand{\ud}{\ensuremath{ \mathrm{d}} }
\newcommand{\Ceil}[1]{\left\lceil #1 \right\rceil}
\newcommand{\FoxH}[5]{H_{#2}^{#1}\left(#3\:\middle\vert\: \begin{subarray}{l}#4\\[0.4em] #5\end{subarray}\right)}
\newcommand*{\one}{{{\rm 1\mkern-1.5mu}\!{\rm I}}}
\title{Space-time fractional stochastic partial differential equations driven by L\'evy white noise}
\author{Yuhui Guo\footnote{Y. Guo: Guangdong Provincial/Zhuhai Key Laboratory of IRADS, and Department of Mathematical Sciences, Faculty of Science and Technology, Beijing Normal-Hong Kong Baptist University, Zhuhai, Guangdong, 519087, China. \textit{Email:} \url{yuhuiguo@uic.edu.cn}}
~~and~~Jiang-Lun Wu\footnote{J.-L. Wu: Guangdong Provincial/Zhuhai Key Laboratory of IRADS, and Department of Mathematical Sciences,
Faculty of Science and Technology, Beijing Normal-Hong Kong Baptist University, Zhuhai, Guangdong, 519087, China. \textit{Email:} \url{jianglunwu@uic.edu.cn}}
}
\date{}
\begin{document}

\maketitle

\begin{center}
\begin{minipage}[rct]{5 in}
\footnotesize \textbf{Abstract:}
This paper is concerned with the following space-time fractional stochastic nonlinear partial differential equation
\begin{equation*}
  \left(\partial_t^{\beta}+\frac{\nu}{2}\left(-\Delta\right)^{\alpha / 2}\right) u=I_{t}^{\gamma}\Big[ f(t,x,u)-\sum_{i=1}^{d} \frac{\partial}{\partial x_i} q_i(t,x,u)+ \sigma(t,x,u) F_{t,x}\Big]
\end{equation*}
for a random field $u(t,x):[0,\infty)\times\R^d \mapsto\R$, where $\alpha>0, \beta\in(0,2), \gamma\ge0, \nu>0, F_{t,x}$ is a L\'evy space-time white noise, $I_{t}^\gamma$ stands for the Riemann-Liouville integral in time, and $f,q_i,\sigma:[0,\infty)\times\R^d\times\R \mapsto\R$ are measurable functions. Under suitable polynomial growth conditions, we establish the existence and uniqueness of $L^2(\R^d)$-valued local solutions when the L\'evy white noise $F_{t,x}$ contains Gaussian noise component. Furthermore, for $p\in[1,2]$, we derive the existence and uniqueness of $L^p(\R^d)$-valued local solutions for the equation driven by pure jump L\'evy white noise. Finally, we obtain certain stronger conditions for the existence and uniqueness of global solutions.

\vspace{2ex}
\textbf{MSC 2020 subject classifications:}
Primary 60H15; Secondary 60G51, 26A33.

\vspace{2ex}
\textbf{Keywords:}
stochastic partial differential equation;
Lévy space–time white noise;
Caputo fractional derivative;
fractional Laplacian;
Riemann-Liouville integral.

\vspace{4ex}
\end{minipage}
\end{center}

\hypersetup{linkcolor=black}
\setcounter{tocdepth}{2}
\tableofcontents
\hypersetup{linkcolor=blue}

\section{Introduction}

In this article, we are concerned with the following nonlinear space-time fractional stochastic partial differential equations (FSPDEs) driven by a L\'evy type noise in $d$-dimensional space $\R^d$:
\begin{equation}\label{e:fde}
	\begin{cases}
	\displaystyle	\left(\partial_t^{\beta}+\frac{\nu}{2}\left(-\Delta\right)^{\alpha / 2}\right) u(t, x,\omega) =I_{t}^{\gamma}\Big[ f(t,x,u(t, x,\omega))&\\
    \displaystyle \qquad\quad -\sum_{j=1}^{d} \frac{\partial}{\partial x_j} q_j(t,x,u(t, x,\omega))+ \sigma(t,x,u(t, x,\omega)) F_{t,x}(\omega)\Big] ,& (t,x,\omega)\in[0,\infty)\times\mathbb{R}^d\times\Omega,   \\
		u(0, x,\omega)=u_0(x,\omega),                                                                                       & \text { if } \beta \in(0,1], \\
		u(0, x,\omega)=u_0(x,\omega), \quad \dfrac{\partial}{\partial t} u(0,x,\omega)=u_1(x,\omega),                                 & \text { if } \beta \in(1,2),
	\end{cases}
\end{equation}
with
\begin{equation*}
  \alpha>0,\quad \beta\in(0,2),\quad \gamma\ge 0, \quad \nu>0,
\end{equation*}
where initial (random) data $u_0,u_1$ are $\cF_0$-measurable, $F_{t,x}$ denotes a L\'evy  type noise, see Section \ref{se:levy} for more details of $F_{t,x}$.
Here for each $1\le j\le d$, $q_j:[0,\infty)\times\R^d\times\R\mapsto\R$ is measurable and corresponds to the ``nonlinearity", and $f,\sigma:[0,\infty)\times\R^d\times\R\mapsto\R$ are measurable.

In the above equation \eqref{e:fde}, $\left(-\Delta\right)^{\alpha / 2}$ is \textit{fractional} ($0<\alpha< 2$) or \textit{power} of  ($\alpha>2$) \textit{Laplacian} in space,
$\partial_t^\beta$ stands for the {\em Caputo fractional differential operator} in time:
\begin{equation}\label{e:CFDO}
  \partial_t^{\beta} f(t):=
\begin{cases}
  \displaystyle \frac{1}{\Gamma(n-\beta)}\int_{0}^{t}\frac{f^{(n)}(\tau)}{(t-\tau)^{\beta+1-n}}\ud\tau, & \mbox{if } \beta \neq n, \vspace{0.2cm} \\
  \dfrac{\ud^n}{\ud t^n}f(t),                                                                                    & \mbox{if }  \beta = n,
\end{cases}
\end{equation}
where $n:=\lceil\beta\rceil$ is the integer part of $\beta$ and
 $\Gamma(x) := \int_{0}^{\infty}e^{-t}t^{x-1}\ud t$ is the Gamma function,
$I_{t}^\gamma$  is the \emph{Riemann-Liouville integral} in time defined as follows
\begin{equation*}
  (I_t^{\gamma}f)(t):=\frac{1}{\Gamma(\gamma)} \int_{0}^{t}f(r)(t-r)^{\gamma-1}\ud r, \quad \text{ for }\gamma>0,
\end{equation*}
with  the convention $I_t^0=$Id (the identity operator), see \cite[Chapter 2]{Kilbas2006Theory} for more details.

As an important case of FSPDEs \eqref{e:fde}, the classical  Burgers equation:
\begin{equation*}
  \frac{\partial}{\partial t} u(t,x) = \frac{1}{2} \frac{\partial^2}{\partial x^2}u(t,x)-\frac{1}{2}\frac{\partial}{\partial x}(u(t,x))^2
\end{equation*}
originally formulated by Forsyth \cite{forsyth1906theory} in 1906 and Bateman \cite{bateman1915some} in 1915, serves as a fundamental model capturing the interplay between nonlinear convective phenomena and diffusive mechanisms. Burgers equations have also been utilised to investigate interface dynamics, as explored in the seminal work \cite{kardar1986dynamic}. A rigorous mathematical investigation of this equation was undertaken by Burgers \cite{Burgers1939Mathematical} in 1939. Fractional Burgers equations replace the Laplacian $\partial^2/\partial x^2$ by a (nonlocal)
operator $-(-\Delta)^{\alpha/2}$, to model anomalous transport and turbulent cascades in $\R^d$, see \cite{Biler1998Fractal,Woyczy1998Burgers} and  references therein.

Stochastic Burgers equations incorporate random forcing (Gaussian
 type or L\'evy type noises) to capture fluctuations beyond deterministic dynamics, see, for instance, \cite{Debbi2005Onthesolutions,Dong2007One-dimensional,Dong2023Global,Gy1999Onthe,Jacob2010Solving,Lewis2018Stochastic,Truman2006On}. Additive Gaussian space time white noise leads to mild solution frameworks amenable to the Hopf–Cole transform \cite{Bertini1994Thestochastic}, yielding explicit invariant measures, ergodic properties, and connections with the Kardar–Parisi–Zhang (KPZ) universality class \cite{Chan2000Scaling}.
 It is worth mentioning that \cite{Brze2007Onstochastic} undertakes a rigorous analysis of one-dimensional stochastic fractional nonlinear Burgers-type partial differential equations driven by Gaussian space time white noise, and establish the existence and uniqueness of their global solutions. \cite{Dong2007One-dimensional} proves the global existence and uniqueness of the strong, weak and mild solutions for one dimensional Burgers equation perturbed by a Poisson form process. \cite{Jacob2010Solving,Truman2006On} study non-linear stochastic equations of Burgers type driven by L\'evy space–time white noise in one space dimension, and then \cite{Wu2012On} extends the corresponding results for $d$-dimensional space when the noise is limited to pure jump case. \cite{Khanin2017Hyperbolicity} develops a completely different approach to extend invariant measures in the
one dimensional Burgers equation obtained by \cite{E2000Invariant} to arbitrary dimensional  case. Recently, \cite{Dong2023Global,Dong2024Ergodicity} studies stochastic Burgers equation on the three dimensional torus driven by  linear multiplicative noise,  and establish the global well-posedness.

The investigation of  FSPDEs \eqref{e:fde} involving the Caputo fractional derivative $\partial_t^{\beta}$  and the Riemann-Liouville integral $I_{t}^{\gamma}$, is primarily motivated by the mathematical and physical significance of incorporating memory effects and anomalous diffusion into stochastic systems, see some previous works \cite{Chen2017Nonlinear,Chen2024Moments,Chen2017Space-time,Chen2019Nonlinear,Guo2024Stochastic,Mijena2015Space} and references therein.
For the Cauchy problem with initial data prescribed at $t=0$, the Caputo fractional derivative $\partial_t^{\beta}$ is also a natural choice, see \cite[Chapters 1 and 3]{Diethelm2004Theanalysis} for  a detailed discussion. The incorporation of the fractional integral $I_{t}^{\gamma}$ induces a temporally non-local influence and yields a marked regularization effect, which can weaken the conditions of  existence and uniqueness of solutions as $\gamma$ increases, see \eqref{e:con-th1} and \eqref{e:con-mainth2}. Moreover, its introduction can also furnishes a more flexible modeling framework, thereby broadening the scope of potential applications.

In this article, we investigate the existence and uniqueness of both local and global solutions. We begin by establishing the local solutions under relatively weaker conditions; see Theorems \ref{th:1} and \ref{th:purejump}. By imposing stronger conditions, we can then prove the existence and uniqueness of the global solutions, see Theorem \ref{th:global}. Furthermore, when the equations reduce to the stochastic (fractional) Burgers equations, we compare our results in Theorems \ref{th:1}, \ref{th:purejump}, and \ref{th:global} with some known results in the literature, see Remarks \ref{re:comp-white} and \ref{re:comp-global}.

The rest of this paper is organised as follows. In Section \ref{se:pre}, we present some preliminaries on (pure jump) L\'evy space–time white noise, and then we recall and also derive several related results of the fundamental solution. In Section \ref{se:main-result}, we give the precise definitions of both local and global solutions,  and then establish the existence and uniqueness of solutions. Finally, Section \ref{se:proof} is devoted to presenting all relevant proofs.

{\bf Notation.} Throughout this paper, we use $\Vert\cdot\Vert_p$ to denote $L^p(\R^d)$-norm in the space variable. $\lceil\cdot\rceil$ is the ceiling function. For any Borel set $A$ in $d$-dimensional Euclidean space $\mathbb{R}^d$, we denote by $\lambda(A)$ its Lebesgue measuren for any $d\ge1$. We use the conventional notation $\N:=\{1,2,\dots\}$ and $i:=\sqrt{-1}$.  For any $\varphi\in L^1(\R^d)$, the Fourier transform of $\varphi$ is given by
\begin{equation*}
  \cF \varphi(\xi):= \int_{\R^d} \varphi(x) e^{-i\xi\cdot x}\ud x.
\end{equation*}
As usual, we will omit $\omega$ in the random field $u(t,x,\omega)$ and write  instead $u(t,x)$. We denote by $C$ a generic positive constant which might change in different positions.

\section{Preliminaries}\label{se:pre}
\subsection{L\'evy type noise}\label{se:levy}

We start with L\'evy space-time white noise on $[0,\infty)\times \R^d$.   Let $(\Omega,\cF,\P)$ be a given complete probability space and $(E,\cB(E),\mu)$ be an arbitrary $\sigma$-finite measure space. Following \cite[Theorem I.8.1]{Ikeda1981Stochastic}, there exists a Poisson random measure on the product measure  space
\begin{equation*}
  \left([0,\infty)\times \R^d\times E,\cB([0,\infty)\times \R^d)\times\cB(E),\ud t\ud x\otimes\mu \right),
\end{equation*}
i.e., a random measure
\begin{equation*}
  N:\cB([0,\infty)\times \R^d)\times\cB(E)\times\Omega \mapsto \N\cup\{0\}\cup\{\infty\},
\end{equation*}
 with mean measure
 $$\E[N([a,b]\times A,B,\cdot)]=|b-a|\:\lambda(A)\mu(B),$$
 for $[a,b]\times A\in\cB([0,\infty)\times\R^d)$ and $B\in \cB(E)$.  In fact, $N$ can be constructed canonically as follows:
 \begin{equation*}
   N([a,b]\times A,B,\omega):=\sum_{n\in\N} \sum_{j=1}^{\eta_n(\omega)}\one_{\{ ([a,b]\times A\cap K_n) \times (B\cap E_n)\}}(\xi_{n,j}(\omega))
   \one_{\{\omega\in\Omega: \eta_n(\omega)\ge1\}}(\omega)
 \end{equation*}
 for $[a,b]\times A\in\cB([0,\infty)\times \R^d)$, $B\in \cB(E)$ and $\omega\in\Omega$, where
 \begin{enumerate}[(a)]
   \item $\{K_n\}_{n\in\N}\subset \cB([0,\infty)\times \R^d)$ is a partition of $[0,\infty)\times \R^d$ with $0<\lambda(K_n)<\infty$, and $\{E_n\}_{n\in\N}\subset \cB(E)$ is a partition of $E$ with $0<\mu(E_n)<\infty$;
   \item for any $n,j\in\N$, $\xi_{n,j}:\Omega\to K_n\times E_n$ is $\cF / \mathcal{K}_n\times \mathcal{E}_n$-measurable with
         \begin{equation*}
           \P \left\{ \omega\in\Omega:\xi_{n,j}(\omega)\in [a,b]\times A\times B \right\}=\frac{|b-a|\lambda(A)\mu(B)}{\lambda(K_n)\mu(E_n)},
         \end{equation*}
         for $[a,b]\times A\in\mathcal{K}_n$ and  $B\in \mathcal{E}_n$, where $\mathcal{K}_n:=\cB([0,\infty)\times \R^d)\cap K_n$ and $\mathcal{E}_n:= \cB(E)\cap E_n$;
   \item for any $n\in\N$, $\eta_n:\Omega\to \N \cup\{0\}\cup\{\infty\}$ is a Poisson distributed random variable with
         \begin{equation*}
           \P \left\{ \omega\in\Omega: \eta_n(\omega)=k \right\}=\frac{e^{-|b-a|\lambda(A)\mu(B)}(|b-a|\lambda(A)\mu(B))^k}{k!};
         \end{equation*}
         for $k\in \N \cup\{0\}\cup\{\infty\}$;
   \item $\xi_{n,j}$ and $\eta_n$ are mutually independent for all $n,j\in\N$.
 \end{enumerate}

 \begin{remark}
 Give any $\sigma$-finite measure $\mu$ on $(E,\cB(E))$, there is always a Poisson random measure $N$ on the product measure space $([0,\infty)\times\R^d\times E, \cB([0,\infty)\times\R^d)\times \cB(E),\ud t\ud x \otimes \mu)$ which can be constructed in the above manner. Such a $N$ is called a canonical Poisson random measure associated with the product $\sigma$-finite measure $\ud t\ud x \otimes \mu$.
\end{remark}

Let $\{\cF_t\}_{t\ge0}$ be a right-continuous, increasing family of sub $\sigma$-algebras of $\cF$, each containing all $\P$-null sets of $\cF$,  such that the canonical Poisson random measure has the following properties:
\begin{enumerate}[(i)]
  \item $N([0,t]\times A,B,\cdot):\Omega\mapsto \N\cup\{0\}\cup\{\infty\}$ is $\cF_t / \mathcal{P}(\N \cup\{0\}\cup\{\infty\})$-measurable for all $(t,A,B)\in[0,\infty)\times\cB(\R^d)\times\cB(E)$, where $\mathcal{P}(\cdot)$ denotes the power set;
  \item $\left\{N([0,t+s]\times A,B,\cdot)-N([0,t]\times A,B,\cdot)\right\}_{s>0,(A,B)\in\cB(\R^d)\times\cB(E)}$  is independent of $\cF_t$ for any $t\ge 0$.
\end{enumerate}
For instance, we may directly take
\begin{equation*}
  \cF_t:=\sigma\left(\left\{ N([0,t]\times A,B,\cdot): (A,B)\in\cB(\R^d)\times\cB(E)\right\}\right)\vee \cN,\quad t\ge0,
\end{equation*}
where $\cN$ denotes all $\P$-null sets of $\cF$.

Define the compensating martingale measure
\begin{equation}\label{e:de-M}
  M(t,A,B,\omega):=N([0,t]\times A,B,\omega)-t\:\lambda(A)\mu(B),
\end{equation}
for any $(t,A,B)\in[0,\infty)\times\cB(\R^d)\times\cB(E)$ with $\lambda(A)\mu(B)<\infty$. Obviously,
\begin{equation*}
  \E\left[ M(t,A,B,\cdot) \right]=0,
\end{equation*}
and
\begin{equation*}
  \E\left[ \left|M(t,A,B,\cdot) \right|^2 \right]= t\:\lambda(A)\mu(B).
\end{equation*}

For any $\{\cF_t\}$-predictable integrand $h:[0,\infty)\times\R^d\times E\times\Omega\to\R$ which satisfies
\begin{equation*}
 \E\left[ \int_{0}^{t}\int_{A}\int_B \left|h(s,x,\xi,\cdot)\right| \ud s\ud x \mu(\ud\xi)\right]<\infty
\end{equation*}
for any $t\ge 0$ and some $(A,B)\in\cB(\R^d)\times\cB(E)$, we can define the stochastic integral
\begin{equation}\label{e:de-sto-int}
  \begin{split}
     \int_{0}^{t}\int_{A}\int_B h(s,x,\xi,\omega) M(\ud s,\ud x,\ud \xi,\omega):=&\int_{0}^{t}\int_{A}\int_B h(s,x,\xi,\omega) N(\ud s,\ud x,\ud \xi,\omega)\\
   &\quad- \int_{0}^{t}\int_{A}\int_B h(s,x,\xi,\cdot) \ud s\ud x\mu(\ud \xi).
  \end{split}
\end{equation}
It is clear that
\begin{equation*}
  t\in[0,\infty)\mapsto \int_{0}^{t}\int_{A}\int_B h(s,x,\xi,\cdot) M(\ud s,\ud x,\ud \xi,\cdot)\in\R
\end{equation*}
is an $\{\cF_t\}$-martingale. Moreover, for any $\{\cF_t\}$-predictable integrand $h$ satisfying
\begin{equation*}
  \E\left[ \int_{0}^{t}\int_{A}\int_B \left|h(s,x,\xi,\cdot)\right|^2 \ud s\ud x \mu(\ud\xi)\right]<\infty,
\end{equation*}
for any $t\ge 0$ and some $(A,B)\in\cB(\R^d)\times\cB(E)$, the stochastic integral defined in \eqref{e:de-sto-int} is also well defined by a limit procedure, see \cite[Section II.3]{Ikeda1981Stochastic}. And then
\begin{equation*}
  t\in[0,\infty)\mapsto \int_{0}^{t}\int_{A}\int_B h(s,x,\xi,\cdot) M(\ud s,\ud x,\ud \xi,\cdot)\in\R
\end{equation*}
is a square integrable $\{\cF_t\}$-martingale with the quadratic variation process
\begin{equation*}
  \left\langle \int_{0}^{t}\int_{A}\int_B h(s,x,\xi,\cdot) M(\ud s,\ud x,\ud \xi,\cdot) \right\rangle_t= \int_{0}^{t}\int_{A}\int_B \left|h(s,x,\xi,\cdot)\right|^2 \ud s\ud x \mu(\ud\xi).
\end{equation*}

\begin{remark}
It is clear that $M$ defined by \eqref{e:de-M} is a worthy, orthogonal, $\{\cF_t\}$-martingale measure in the context of Walsh \cite{Walsh1986Anintroduction}. Thus stochastic integrals of $\{\cF\}_t$-predictable integrands with respect to $M$ can be defined alternatively by the method in \cite[Chapter 2]{Walsh1986Anintroduction}.
\end{remark}

For the Poisson random measure $N$ and its compensating martingale measure $M$, we  can define the heuristic ``Radon–Nikodym derivatives'' in the sense of distributions
\begin{equation}\label{e:de-Ntx}
  N_{t,x}(B,\omega):=\frac{N(\ud t\ud x,B,\omega)}{\ud t\ud x}(t,x)
\end{equation}
and
\begin{equation}\label{e:de-Mtx}
  M_{t,x}(B,\omega):=\frac{M(\ud t\ud x,B,\omega)}{\ud t\ud x}(t,x)=N_{t,x}(B,\omega)-\mu(B)
\end{equation}
for $(t,x)\in[0,\infty)\times\R^d$ and $(B,\omega)\in\cB(E)\times\Omega$.  Here, $M_{t,x}$ is the \textit{Poisson space–time white noise}.

By L\'evy–It\^o decomposition, a L\'evy space-time white noise has the following structure
\begin{equation}\label{e:de-levy}
  F_{t,x}(\omega)=W_{t,x}(\omega)+\int_{U_0}h_1(t,x;\xi)M_{t,x}(\ud\xi,\omega)+\int_{E\setminus U_0}h_2(t,x;\xi)N_{t,x}(\ud\xi,\omega),
\end{equation}
for $\omega\in\Omega$, where $h_1,h_2:[0,\infty)\times\R^d\times E\mapsto\R$ are measurable and $U_0\in\cB(E)$ with $\mu(E\setminus U_0)<\infty$. Formally, $W_{t,x}:=\frac{\partial^2 W(t,x)}{\partial t\partial x}$ is a Gaussian space-time white noise on $[0,\infty)\times\R^d$ used initially by Walsh \cite{Walsh1986Anintroduction}, where $W(t,x)$ is a Brownian sheet on $[0,\infty)\times\R^d$.  $M_{t,x}$ and $N_{t,x}$ are defined formally as Radon–Nikodym derivatives as in \eqref{e:de-Mtx} and \eqref{e:de-Ntx}, respectively.

In this paper, we are also interested in studying the FSPDEs driven by pure jump L\'evy noise (i.e., without Gaussian component) in the following manner
\begin{equation}\label{e:de-jumplevy}
  F_{t,x}(\omega)=\int_{U_0}h_1(t,x;\xi)M_{t,x}(\ud\xi,\omega)+\int_{E\setminus U_0}h_2(t,x;\xi)N_{t,x}(\ud\xi,\omega).
\end{equation}

The following proposition is borrowed from \cite[Proposition 2.1]{Wu2012On}, which is derived from the $L^p$-theory for stochastic integral with respect to a Poisson random measure introduced by \cite[Theorem 8.23 (i)]{Peszat2007Stochastic}. This result is a key point for us to investigate $L^p(\R^d)$-valued solutions to the FSPDEs
\eqref{e:fde} above.

\begin{proposition}\label{prop:boundpoisson}
  Let $p\in[1,2]$. For any $\cF_t$-adapted function $\phi:[0,\infty)\times\R^d\times E\times \Omega\to\R$ satisfying
  \begin{equation*}
    \int_{0}^{t}\int_{\R^d}\int_E \E\left[\left|\phi(s,x,\xi)\right|^p\right]\ud s\ud x \mu(\ud\xi)<\infty,
  \end{equation*}
  the stochastic integral
  \begin{equation*}
    \int_{0}^{t}\int_{\R^d}\int_E \phi(s,x,\xi)   M(\ud s,\ud x,\ud\xi)
  \end{equation*}
   is well defined in $L^p(\Omega,\cF,\P)$ and is $\{\cF_t\}$-adapted. Moreover, the following inequality holds
   \begin{equation*}
     \E\left[\left|\int_{0}^{t}\int_{\R^d}\int_E \phi(s,x,\xi) M(\ud s,\ud x,\ud\xi)\right|^p\right]\le C\int_{0}^{t}\int_{\R^d}\int_E \E\left[\left|\phi(s,x,\xi)\right|^p\right]\ud s\ud x \mu(\ud\xi).
   \end{equation*}
\end{proposition}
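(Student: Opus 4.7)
The plan is to establish the $L^p$ bound first for simple $\{\cF_t\}$-predictable integrands, then extend to the general class by density in the norm $\|\phi\|_p := \left(\int_0^t \int_{\R^d} \int_E \E[|\phi|^p] \ud s\,\ud x\,\mu(\ud\xi)\right)^{1/p}$. Once the bound is known on a dense subspace, the integral is defined as the unique $L^p(\Omega)$-limit of the integrals of approximating simple processes; its $\{\cF_t\}$-adaptedness passes to the limit, and the inequality itself extends by Fatou's lemma.

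First, for a simple integrand $\phi(s,x,\xi,\omega) = \sum_{j=1}^n \eta_j(\omega)\,\one_{(t_j,t_{j+1}]}(s)\,\one_{A_j}(x)\,\one_{B_j}(\xi)$ with bounded $\cF_{t_j}$-measurable $\eta_j$ and pairwise disjoint rectangles, the integral is $S_n := \sum_j \eta_j Z_j$ where $Z_j := M((t_j,t_{j+1}],A_j,B_j)$. The $Z_j$ are mean-zero and mutually independent, and each is independent of $\cF_{t_j}$, so $\{\eta_j Z_j\}$ is a martingale difference sequence with respect to $\{\cF_{t_{j+1}}\}$. The martingale version of the von Bahr--Esseen inequality (valid for $p\in[1,2]$) then yields
\begin{equation*}
   \E[|S_n|^p] \le 2 \sum_j \E[|\eta_j|^p]\, \E[|Z_j|^p].
\end{equation*}

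The crux is the uniform Poisson moment bound: for $p\in[1,2]$ and any $\lambda\ge 0$, if $N\sim\mathrm{Poisson}(\lambda)$ then $\E[|N-\lambda|^p]\le C_p\,\lambda$. This follows from two complementary estimates. Jensen's inequality, using $\E[|N-\lambda|^2]=\lambda$, gives $\E[|N-\lambda|^p]\le \lambda^{p/2}\le \lambda$ whenever $\lambda\ge 1$; for $\lambda\le 1$, a direct computation from the Poisson p.m.f., dominated by the $\{N=0\}$ and $\{N=1\}$ contributions, produces $\E[|N-\lambda|^p]\le C_p\,\lambda$. Taking $\lambda = (t_{j+1}-t_j)\lambda(A_j)\mu(B_j)$, this combines with the previous display to give $\E[|S_n|^p]\le 2C_p\,\|\phi\|_p^p$. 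Density of simple $\{\cF_t\}$-predictable integrands in the $\|\cdot\|_p$-norm is standard; applying the bound to differences $\phi_n-\phi_m$ shows that the corresponding integrals form a Cauchy sequence in $L^p(\Omega)$, whose limit defines $\int \phi\,\ud M$ and inherits adaptedness and the desired inequality by lower semicontinuity.

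The main obstacle is the uniform-in-$\lambda$ Poisson moment bound $\E[|N-\lambda|^p]\le C_p\,\lambda$: this is the step that pins down the restriction $p\in[1,2]$, since for $p>2$ and large $\lambda$ the expectation grows like $\lambda^{p/2}>\lambda$ and no such linear-in-$\lambda$ bound can hold. A substantially more economical route is to invoke Peszat--Zabczyk \cite[Theorem 8.23(i)]{Peszat2007Stochastic} directly, which packages precisely the moment inequality above and reduces the proof to verifying the predictability and integrability hypotheses on $\phi$.
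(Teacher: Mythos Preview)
The paper does not supply a proof of this proposition at all: it is stated as borrowed from \cite[Proposition~2.1]{Wu2012On}, with the remark that the result is derived from the $L^p$-theory for Poisson stochastic integrals in \cite[Theorem~8.23(i)]{Peszat2007Stochastic}. So your final sentence already captures exactly the route the paper takes---invoke Peszat--Zabczyk and be done.

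Your self-contained argument via von Bahr--Esseen on martingale differences plus the uniform Poisson moment bound $\E[|N-\lambda|^p]\le C_p\lambda$ is a genuinely more elementary treatment than a bare citation, and it is essentially correct. Two small points worth tightening: (i) the statement assumes $\phi$ is merely $\{\cF_t\}$-adapted, whereas the approximation by simple \emph{predictable} integrands requires predictability; the paper handles this separately by always passing to a predictable modification, so you should either assume predictability or note that step explicitly. (ii) In the simple-integrand computation you want the time intervals $(t_j,t_{j+1}]$ ordered so that $\{\eta_j Z_j\}$ really is a martingale difference sequence; disjointness of the full rectangles alone is not quite enough without a common time partition, though this is a routine refinement. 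With those caveats, your argument goes through and gives more insight than the paper's citation, in particular making transparent why the range $p\in[1,2]$ is sharp.
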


\subsection{Fundamental solution}
In this subsection, we shall prove two lemmas for the fundamental solution to Eq. \eqref{e:fde} for later use.
The derivation of the fundamental solution was first given in \cite[Theorem 4.1]{Chen2019Nonlinear}, see also \cite[Theorem 2.8]{Chen2024Moments}.
For $\alpha\in (0,\infty)$, $\beta\in (0,2]$, and $\gamma\ge 0$, the solution to the following deterministic,
homogeneous partial differential equation {%\textcolor{magenta}{}
\begin{align*}%\label{E:PDE}
  \begin{cases}
    \left(\partial_t^\beta + \dfrac{\nu}{2} (-\Delta)^{\alpha/2}\right) u(t,x) = 0, & t>0,\: x\in\R^d, \\[1em]
    u(0, x)=\delta(x),                                          & \text {for } \beta \in(0,1], \\
	u(0, x)=0, \quad \dfrac{\partial}{\partial t} u(0,x)=\delta(x),                 & \text {for } \beta \in(1,2),
  \end{cases}
\end{align*}
is the fundamental solution to Eq. \eqref{e:fde},} which is denoted by
$Z_{\alpha,\beta,d}(t,x)=:Z(t,x)$ for clarity, and  the solution of the following deterministic partial differential equation
\begin{align*}%\label{E:PDE}
  \begin{cases}
    \left(\partial_t^\beta + \dfrac{\nu}{2} (-\Delta)^{\alpha/2}\right) u(t,x) = I_{t}^\gamma\left[f(t,x)\right], & \qquad t>0,\: x\in\R^d, \\[1em]
    \left.\dfrac{\partial^k}{\partial t^k} u(t,x)\right|_{t = 0} = u_k(x), & \qquad 0\le k\le \Ceil{\beta}-1, \:\: x\in\R^d,
  \end{cases}
\end{align*}
can be then expressed by
\begin{align}\label{E:Duhamel}
  u(t,x) = J_0(t,x) + \int_0^t \ud s \int_{\R^d} \ud y\: f(s,y) \: D_{t}^{\Ceil{\beta}-\beta-\gamma} Z(t-s,x-y),
\end{align}
where the symbol $D_{t}^{\Ceil{\beta}-\beta-\gamma}$ denotes the \textit{Riemann-Liouville derivative} of order $\Ceil{\beta}-\beta-\gamma$ acting on the time variable, namely
\begin{align*}
  D_{t}^{\Ceil{\beta}-\beta-\gamma}f(t)&:=\frac{\ud^n}{\ud t^n} \left[I_{t}^{n-\Ceil{\beta}-\beta-\gamma}f (t)\right], \quad n = \Ceil{\Ceil{\beta}-\beta-\gamma},
\end{align*}
and
\begin{align}\label{E:J0}
  J_0(t,x):= \sum_{k = 0}^{\Ceil{\beta}-1}\int_{\R^d} u_{k}(y) \partial^{\Ceil{\beta}-1-k} Z(t,x-y) \ud y
\end{align}
is the solution to the following
\begin{align*}%\label{E:PDE}
  \begin{cases}
    \left(\partial_t^\beta + \dfrac{\nu}{2} (-\Delta)^{\alpha/2}\right) u(t,x) = 0, & \qquad t>0,\: x\in\R^d, \\[1em]
    \left.\dfrac{\partial^k}{\partial t^k} u(t,x)\right|_{t = 0} = u_k(x), & \qquad 0\le k\le \Ceil{\beta}-1, \:\: x\in\R^d.
  \end{cases}
\end{align*}
We further denote
\begin{align*}
  &Y(t,x) := Y_{\alpha,\beta,\gamma,d}(t,x) = D_{t}^{\Ceil{\beta}-\beta-\gamma}Z_{\alpha,\beta,d}(t,x), \\
  &Z^* (t,x) := Z^{*}_{\alpha,\beta,d}(t,x) = \frac{\partial}{\partial t} Z_{\alpha,\beta,d}(t,x),
\end{align*}
then, for $\beta\in (0,2)$, one has the following explicit expressions:
\begin{align*}%\label{E:Zab}
   Z(t,x) = \pi^{-\frac d2} t^{\Ceil{\beta}-1} |x|^{-d}
   \FoxH{2,1}{2,3}{\frac{|x|^\alpha}{2^{\alpha-1}\nu t^\beta}}{(1,1),\:(\Ceil{\beta},\beta)}
   {(d/2,\alpha/2),\:(1,1),\:(1,\alpha/2)},
\end{align*}
\begin{align}\label{E:Yab}
    Y(t,x) = \pi^{-\frac d2} |x|^{-d}t^{\beta+\gamma-1}
     \FoxH{2,1}{2,3}{\frac{|x|^\alpha}{2^{\alpha-1}\nu t^\beta}}
     {(1,1),\:(\beta+\gamma,\beta)}{(d/2,\alpha/2),\:(1,1),\:(1,\alpha/2)},
\end{align}
and, for $\beta\in (1,2)$,
\begin{align*}%\label{E:Z*ab}
  Z^{*}(t,x)=
  \pi^{-\frac d2} |x|^{-d}
  \FoxH{2,1}{2,3}{\frac{|x|^\alpha}{2^{\alpha-1}\nu t^\beta}}{(1,1),\:(1,\beta)}
  {(d/2,\alpha/2),\:(1,1),\:(1,\alpha/2)},
\end{align*}
where $\FoxH{2,1}{2,3}{\cdots}{\cdots}{\cdots}$ refers to the Fox $H$-function, see Kilbas \cite{Kilbas2004H}. We also have the following Fourier transforms:
\begin{align}\label{E:FZ}
 \cF Z(\xi)        & = t^{\Ceil{\beta}-1} E_{\beta,\Ceil{\beta}}(-\tfrac12 \nu t^\beta |\xi|^\alpha),              \\
 \cF Y(t,\cdot)(\xi) & = t^{\beta+\gamma-1} E_{\beta,\beta+\gamma}(-\tfrac12 \nu t^\beta |\xi|^\alpha), \notag\\
 \cF Z^*(t,\cdot)(\xi)      & = t^{k} E_{\beta,k+1}(-\tfrac12\nu t^\beta |\xi|^\alpha),
 \label{E:FZ*}
\end{align}
where $E_{a, b}(z)$ is the two-parameter Mittag-Leffler function defined by
\begin{equation*}
  E_{a, b}(z):=\sum_{k=0}^{\infty}\frac{z^k}{\Gamma(a k+b)},
\end{equation*}
for all $a>0, b\in\mathbb{C}$ (see, e.g., \cite[Sect.1.2]{Podlubny1999Fractional}).

Throughout the rest of the article, we will follow the convention to write
\begin{equation}\label{e:de-p}
  p(t,x):=Y(t,x)=\pi^{-\frac d2} |x|^{-d}t^{\beta+\gamma-1} \FoxH{2,1}{2,3}{\frac{|x|^\alpha}{2^{\alpha-1}\nu t^\beta}}
     {(1,1),\:(\beta+\gamma,\beta)}{(d/2,\alpha/2),\:(1,1),\:(1,\alpha/2)}.
\end{equation}

We are now in the position to show the following two results, which will be used in the sequel.
\begin{lemma}\label{le:upb-fmsl}
Let $p>0$ if $\alpha\ge d$ or $0<p<\frac{d}{d-\alpha}$ if $\alpha<d$. Then,
there exists a constant $C$ depending on $(\alpha,\beta,\gamma,d,p)$ such that for all $t>0$,
  \begin{equation*}
    \sup_{y\in\R^d}\int_{\R^d} \left|p(t,x-y)\right|^p\ud x= \int_{\R^d} \left|p(t,x)\right|^p\ud x = C\: t^{p(\beta+\gamma-1)+\frac{\beta d}{\alpha}(1-p)}.
  \end{equation*}
\end{lemma}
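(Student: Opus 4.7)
The first equality is immediate from the translation invariance of Lebesgue measure, so the whole task reduces to evaluating $\int_{\R^d} |p(t,x)|^p \ud x$ and identifying its $t$-dependence. The plan is to exploit the self-similarity built into the Fox $H$-function representation of $p(t,x)$ in \eqref{e:de-p}: the Fox $H$-function there depends on $(t,x)$ only through the combination $|x|^\alpha / (2^{\alpha-1}\nu\, t^\beta)$, which suggests the substitution $x=t^{\beta/\alpha} z$.

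Under this substitution, $|x|^{-d}=t^{-\beta d/\alpha}|z|^{-d}$ and the argument of the $H$-function becomes the $t$-independent quantity $|z|^\alpha/(2^{\alpha-1}\nu)$, so
\begin{equation*}
  p\bigl(t,t^{\beta/\alpha}z\bigr)=t^{\beta+\gamma-1-\beta d/\alpha}\,\tilde p(z),\qquad \tilde p(z):=\pi^{-d/2}|z|^{-d}\FoxH{2,1}{2,3}{\frac{|z|^\alpha}{2^{\alpha-1}\nu}}{(1,1),\:(\beta+\gamma,\beta)}{(d/2,\alpha/2),\:(1,1),\:(1,\alpha/2)}.
\end{equation*}
Combining this with the Jacobian factor $t^{\beta d/\alpha}$ coming from $\ud x = t^{\beta d/\alpha}\ud z$ yields
\begin{equation*}
  \int_{\R^d}|p(t,x)|^p\ud x = t^{p(\beta+\gamma-1)+(\beta d/\alpha)(1-p)}\int_{\R^d}|\tilde p(z)|^p\ud z,
\end{equation*}
which produces exactly the claimed power of $t$, so it only remains to verify the finiteness of the constant $C:=\int_{\R^d}|\tilde p(z)|^p\ud z$ under the hypotheses on $p$.

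The main (and only non-routine) obstacle is therefore the convergence of this integral, which requires care at both the origin and infinity and must be handled via the standard asymptotic theory for Fox $H$-functions (see Kilbas \cite{Kilbas2004H}). Near $z=0$, expanding the $H$-function about its dominant pole shows that $\tilde p(z)$ behaves like $|z|^{\alpha-d}$ when $\alpha<d$ (with a bounded, respectively logarithmic, behavior when $\alpha>d$, respectively $\alpha=d$); thus local integrability of $|\tilde p|^p$ around the origin is equivalent to $p(d-\alpha)<d$, which is automatic when $\alpha\ge d$ and reduces to $p<d/(d-\alpha)$ when $\alpha<d$, matching exactly the hypothesis. For the tail, the asymptotics of the same $H$-function give a polynomial decay that is amply sufficient to guarantee integrability of $|\tilde p|^p$ at infinity for every $p>0$ in the ranges under consideration, so no further restriction on $p$ is needed. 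Putting these two pieces together gives $C\in(0,\infty)$ and completes the proof.
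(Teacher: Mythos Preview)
Your proposal is correct and follows essentially the same route as the paper: translation invariance, the scaling substitution $x=t^{\beta/\alpha}z$ to extract the exact power of $t$, and then asymptotic analysis of the Fox $H$-function at the origin and at infinity to verify $C<\infty$. The only cosmetic difference is that the paper quotes the explicit small- and large-argument bounds from \cite[Lemmas 4.3 and 4.5]{Chen2019Nonlinear} (including the exponential tail in the special case $\alpha=2$), whereas you appeal to the general asymptotic theory in \cite{Kilbas2004H}; the logical structure is identical.
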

\begin{proof}
Under our assumption, we have first of all that
  \begin{equation}\label{e-con-fs}
   p\alpha>d(p-1).
  \end{equation}
  Using the change of variable $x=x'+y$, for all $y\in\R^d$, we have
  \begin{align*}
     &\int_{\R^d} \left|p(t,x-y)\right|^p\ud x= \int_{\R^d} \left|p(t,x)\right|^p\ud x\\
     &=t^{p(\beta+\gamma-1)} \int_{\R^d} \left|\pi^{-\frac d2} |x|^{-d}\FoxH{2,1}{2,3}{\frac{|x|^\alpha}{2^{\alpha-1}\nu t^\beta}}
     {(1,1),\:(\beta+\gamma,\beta)}{(d/2,\alpha/2),\:(1,1),\:(1,\alpha/2)} \right|^p\ud x.
  \end{align*}
  Then, by the change of variable $x=t^{\beta/\alpha}x'$, we have
  \begin{align}\label{e:int-p1x}
     & \int_{\R^d} \left|p(t,x)\right|^p\ud x\notag\\
     &=t^{p(\beta+\gamma-1)+\frac{\beta d}{\alpha}(1-p)}\int_{\R^d} \left|\pi^{-\frac d2} |x|^{-d} \FoxH{2,1}{2,3}{\frac{|x|^\alpha}{2^{\alpha-1}\nu }}
     {(1,1),\:(\beta+\gamma,\beta)}{(d/2,\alpha/2),\:(1,1),\:(1,\alpha/2)}\right|^p\ud x.
  \end{align}
  Using the asymptotic property of $p(1,x)$ \cite[Lemma 4.3 and Lemma 4.5]{Chen2019Nonlinear}, we have
  \begin{equation*}%\label{e:asyp1x0}
    |p(1,x)| \le C\big( 1+|x|^{\alpha-d}+|\log (|x|)|\big),
  \end{equation*}
 as $|x|\to0$  and
  \begin{equation*}%\label{e:asyp1x1}
    |p(1,x)| \le
\begin{cases}
 C |x|^{-(d+\alpha)}& \text{if $\alpha\ne 2$},\\
 C |x|^{a}e^{-b|x|^c}
 &\text{if $\alpha=2$,}
\end{cases}
  \end{equation*}
  as $|x|\to\infty$, where $a$ is a nonnegative and $b,c$ are strictly positive. Then, we can conclude that the integral in \eqref{e:int-p1x} is finite under the condition \eqref{e-con-fs}. The proof is complete.
\end{proof}

\begin{lemma}\label{le:upb-fms-deri}
Let $p>0$ if $\alpha\ge d+1$ or $0<p<\frac{d}{d+1-\alpha}$ if $\alpha<d$. Then,
there exists a constant $C$ depending on $(\alpha,\beta,\gamma,d,p)$ such that for all $t>0$,
  \begin{equation*}
    \int_{\R^d} \left|\frac{\partial }{\partial x_i} p(t,x)\right|^p\ud x = C\: t^{p(\beta+\gamma-1)+\frac{\beta}{\alpha}(d-pd-p)}.
  \end{equation*}
\end{lemma}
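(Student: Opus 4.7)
The plan is to mimic the scaling argument in the proof of Lemma \ref{le:upb-fmsl}, carrying along one extra spatial derivative. From the explicit formula \eqref{e:de-p}, a change of variable in the Fox $H$-function argument gives the scaling identity
\begin{align*}
p(t,x) = C\, t^{\beta+\gamma-1-\beta d/\alpha}\, p\bigl(1,\, t^{-\beta/\alpha} x\bigr),
\end{align*}
so differentiating in $x_i$ contributes an additional factor $t^{-\beta/\alpha}$:
\begin{align*}
\frac{\partial}{\partial x_i} p(t,x) = C\, t^{\beta+\gamma-1-\beta(d+1)/\alpha}\, \bigl(\partial_{y_i} p(1,\cdot)\bigr)\bigl(t^{-\beta/\alpha}\, x\bigr).
\end{align*}
Taking $|\cdot|^p$, integrating in $x$, and performing the substitution $x = t^{\beta/\alpha} y$ (Jacobian $t^{\beta d/\alpha}$) produces
\begin{align*}
\int_{\R^d}\Bigl|\tfrac{\partial}{\partial x_i} p(t,x)\Bigr|^p\, \ud x
= C\, t^{p(\beta+\gamma-1)+\frac{\beta}{\alpha}(d-pd-p)}\, \int_{\R^d}|\partial_{y_i} p(1,y)|^p\, \ud y,
\end{align*}
which already yields the claimed power of $t$. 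It then only remains to show the $t=1$ integral is finite under the stated hypothesis.

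For this I would derive asymptotic bounds on $\partial_{y_i} p(1,y)$ near $0$ and near $\infty$. Differentiation adds one power of $|y|^{-1}$ to the near-origin singularity while preserving the fast decay at infinity. Applying the Fox $H$-function differentiation rule to \eqref{E:Yab} (or equivalently differentiating under the Mellin-Barnes integral that underlies \cite[Lemma 4.3, Lemma 4.5]{Chen2019Nonlinear}) yields, up to logarithmic corrections in borderline cases,
\begin{align*}
|\partial_{y_i} p(1,y)| \le C\bigl(1 + |y|^{\alpha-d-1}\bigr) \quad \text{as }|y|\to 0,
\end{align*}
and
\begin{align*}
|\partial_{y_i} p(1,y)| \le C|y|^{-(d+\alpha+1)}\ \text{for } \alpha\ne 2, \qquad \text{or Gaussian-type decay for }\alpha=2,
\end{align*}
as $|y|\to\infty$. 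Polar integration near the origin yields finiteness iff $p(\alpha-d-1) > -d$, i.e.\ $p(d+1-\alpha) < d$, which is precisely the condition $p < d/(d+1-\alpha)$ when $\alpha < d+1$, with no restriction when $\alpha \ge d+1$. The decay at infinity contributes only a milder constraint already subsumed by the hypothesis.

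The main obstacle is the near-origin asymptotic for $\partial_{y_i} p(1,y)$, since this is where the stated range of $p$ becomes sharp. The cleanest route is the Fox $H$-function differentiation identity, which sends the kernel in \eqref{E:Yab} to another Fox $H$-function with parameters shifted by one and an extra factor $|y|^{-1}$; the asymptotic estimates of \cite[Lemma 4.3, Lemma 4.5]{Chen2019Nonlinear} then apply verbatim. Alternatively, one could differentiate the Fourier representation \eqref{E:FZ} by multiplying by $i\xi_i$ and rerunning the contour-estimate machinery, but this is more laborious and gives the same bounds.
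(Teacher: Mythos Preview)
Your proposal is correct and follows essentially the same route as the paper: extract the power of $t$ by scaling, express the spatial derivative via the Fox $H$-function differentiation rule, and then verify finiteness of the $t=1$ integral through the small- and large-argument asymptotics of the resulting $H$-function. The only cosmetic differences are that the paper differentiates first (obtaining an explicit $H^{2,2}_{3,4}$ kernel) and then rescales, and it cites \cite[Lemma A.4]{guo2024sample} for the asymptotics of that new $H$-function rather than inferring them from \cite[Lemmas 4.3, 4.5]{Chen2019Nonlinear}; the paper's far-field bound is in fact the coarser $|y|^{-(d+1)}$ rather than your $|y|^{-(d+\alpha+1)}$, but either suffices.
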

\begin{proof}
First, we have
  \begin{equation}\label{e-con-fs-partial}
    p \alpha> d(p-1)+p.
  \end{equation}
By \cite[(2.2.1)]{Kilbas2004H}, we have
\begin{align}\label{e:udxi-ptx}
   & \frac{\ud}{\ud x}\left[x^{-d}\times \FoxH{2,1}{2,3}{C x^\alpha}
       {(1,1),\:(\beta+\gamma,\beta)}{(d/2,\alpha/2),\:(1,1),\:(1,\alpha/2)}\right]\notag\\
       &= \, x^{-d-1} \FoxH{2,2}{3,4}{C x^\alpha}
       {(d,\alpha),\:(1,1),\:(\beta+\gamma,\beta)}{(d/2,\alpha/2),\:(1,1),\:(1,\alpha/2),\:(d+1,\alpha)}.
\end{align}
Then, by \eqref{e:de-p} and \eqref{e:udxi-ptx}, for $i=1, \dots, d$,
\begin{align*}%\label{e:exchange-y-neq-x}
  &\frac{\partial}{\partial x_i}p(t,x)= \pi^{-d/2}|x|^{-d-1} t^{\beta+\gamma-1}\times \FoxH{2,2}{3,4}{\frac{ |x|^\alpha}{2^{\alpha-1}\nu t^\beta }}
       {(d,\alpha),\:(1,1),\:(\beta+\gamma,\beta)}{(d/2,\alpha/2),\:(1,1),\:(1,\alpha/2),\:(d+1,\alpha)}\times\frac{x_i}{|x|}.
\end{align*}
Using the change of variable $x=t^{\beta/\alpha}x'$, we have
\begin{align*}
  &\int_{\R^d} \bigg|\frac{\partial}{\partial x_i}p(t,x)\bigg|^p \ud x \notag\\
  &=t^{p(\beta+\gamma-1)} \int_{\R^d}  \bigg| \pi^{-d/2}|x|^{-d-1}  \FoxH{2,2}{3,4}{\frac{ |x|^\alpha}{2^{\alpha-1}\nu  t^\beta }}
       {(d,\alpha),\:(1,1),\:(\beta+\gamma,\beta)}{(d/2,\alpha/2),\:(1,1),\:(1,\alpha/2),\:(d+1,\alpha)} \frac{x_i}{|x|}\bigg|^p \ud x\notag\\
  &=t^{p(\beta+\gamma-1)+\frac{\beta}{\alpha}(d-pd-p)} \int_{\R^d}  \bigg| \pi^{-d/2}|x|^{-d-1}  \FoxH{2,2}{3,4}{\frac{ |x|^\alpha}{2^{\alpha-1}\nu  }}
       {(d,\alpha),\:(1,1),\:(\beta+\gamma,\beta)}{(d/2,\alpha/2),\:(1,1),\:(1,\alpha/2),\:(d+1,\alpha)} \frac{x_i}{|x|}\bigg|^p \ud x.
\end{align*}
Applying spherical coordinate system to see that
\begin{align}\label{e:int-H2234x}
  &\int_{\R^d} \bigg|\frac{\partial}{\partial x_i}p(t,x)\bigg|^p \ud x \notag\\
  &=C\: t^{p(\beta+\gamma-1)+\frac{\beta}{\alpha}(d-pd-p)} \int_{0}^\infty  \bigg| r^{-d-1}  \FoxH{2,2}{3,4}{\frac{ r^\alpha}{2^{\alpha-1}\nu }}
       {(d,\alpha),\:(1,1),\:(\beta+\gamma,\beta)}{(d/2,\alpha/2),\:(1,1),\:(1,\alpha/2),\:(d+1,\alpha)}\bigg|^p \ud r.
\end{align}
Following \cite[Lemma A.4]{guo2024sample}, we obtain that
\begin{equation*}
  \FoxH{2,2}{3,4}{\frac{ r^\alpha}{2^{\alpha-1}\nu}}
       {(d,\alpha),\:(1,1),\:(\beta+\gamma,\beta)}{(d/2,\alpha/2),\:(1,1),\:(1,\alpha/2),\:(d+1,\alpha)}\le C \left( r^{d+2} +r^{\alpha}|\log(r)| \right)
\end{equation*}
as $r\to0$,  and
\begin{equation*}
  \FoxH{2,2}{3,4}{\frac{ r^\alpha}{2^{\alpha-1}\nu }}
       {(d,\alpha),\:(1,1),\:(\beta+\gamma,\beta)}{(d/2,\alpha/2),\:(1,1),\:(1,\alpha/2),\:(d+1,\alpha)}\le C
\end{equation*}
as $r\to\infty$. Then, we can conclude that the integral in \eqref{e:int-H2234x} is finite under the condition \eqref{e-con-fs-partial}. The proof is complete.
\end{proof}

\section{Main results}\label{se:main-result}

We first present a mild formulation of Eq. \eqref{e:fde}. Using \eqref{e:de-levy} and \eqref{E:Duhamel}, we have
\begin{align*}
       u(t,x)= & J_0(t,x)+\int_{0}^{t}\int_{\R^d}p(t-s,x-y)f(s,y,u(s,y))\ud s\ud y \\
   &- \sum_{j=1}^{d}\int_{0}^{t} \int_{\R^d}  p(t-s,x-y) \left[\frac{\partial}{\partial y_j } q_j(s,y,u(s, y))\right] \ud s\ud y \\
   &+\int_{0}^{t}\int_{\R^d}p(t-s,x-y) F(\ud s,\ud y). \\
\end{align*}
Apply integration by parts formula then yields the following
\begin{equation}\label{e:de-so-wal}
  \begin{split}
       u(t,x)= &J_0(t,x)+\int_{0}^{t}\int_{\R^d}p(t-s,x-y)f(s,y,u(s,y))\ud s\ud y  \\
  &+\sum_{j=1}^{d} \int_{0}^{t} \int_{\R^d} \left[\frac{\partial}{\partial y_j } p(t-s,x-y)\right]  q_j(s,y,u(s, y,\omega))\ud s\ud y  \\
  &+ \int_{0}^{t}\int_{\R^d} p(t-s,x-y)\sigma(s,y,u(s,y)) W(\ud s,\ud y)  \\
  &+\int_{0}^{t}\int_{\R^d}\int_{U_0} p(t-s,x-y)\sigma(s,y,u(s,y)) h_1(s,y;\xi) M(\ud s,\ud y,\ud\xi)  \\
  &+ \int_{0}^{t}\int_{\R^d}\int_{E\setminus U_0} p(t-s,x-y)\sigma(s,y,u(s,y)) h_2(s,y;\xi) N(\ud s,\ud y,\ud\xi).
  \end{split}
\end{equation}
Following \cite{Walsh1986Anintroduction}, here and in the sequel, an $\{\cF_t\}$-predictable version of $u$ is taken as $u$ which is c\`adl\`ag in time variable $t\in[0,\infty)$, in order that the stochastic integrals with respect to $M$ and $N$ are respectively well defined.

Next, we reformulate Eq. \eqref{e:de-so-wal} by the following consideration. Observing that $\mu(E\setminus U_0)<\infty$, we have
\begin{align*}
   &\int_{0}^{t}\int_{\R^d}\int_{E\setminus U_0} p(t-s,x-y)\sigma(s,y,u(s,y)) h_2(s,y;\xi)N(\ud s,\ud y,\ud\xi)\\
   &= \int_{0}^{t}\int_{\R^d}\int_{E\setminus U_0} p(t-s,x-y)\sigma(s,y,u(s,y)) h_2(s,y;\xi)M(\ud s,\ud y,\ud\xi)\\
   &\quad + \int_{0}^{t}\int_{\R^d}\int_{E\setminus U_0} p(t-s,x-y)\sigma(s,y,u(s,y)) h_2(s,y;\xi)\ud s \ud y \mu(\ud\xi).
\end{align*}
Thus, without loss of generality, we shall consider the equation in the following form:
\begin{equation}\label{e:de-solution}
  \begin{split}
     u(t,x)= & J_0(t,x)+\int_{0}^{t}\int_{\R^d}p(t-s,x-y)f(s,y,u(s,y))\ud s\ud y\\
     &+\sum_{j=1}^{d} \int_{0}^{t} \int_{\R^d} \left[\frac{\partial}{\partial y_j } p(t-s,x-y)\right]  q_j(s,y,u(s, y,\omega))\ud s\ud y \\
     &+ \int_{0}^{t}\int_{\R^d} p(t-s,x-y)\sigma(s,y,u(s,y)) W(\ud s,\ud y)\\
   &+ \int_{0}^{t}\int_{\R^d}\int_{E} p(t-s,x-y) h(s,y,u(s,y);\xi) M(\ud s,\ud y,\ud\xi),
  \end{split}
\end{equation}
where $f,\sigma:[0,\infty)\times\R^d\times\R\mapsto\R$ and $h:[0,\infty)\times\R^d\times\R\times E\mapsto\R $ are measurable.
Clearly, Eq. \eqref{e:de-solution} is a mild formulation of the following (formal) equation
\begin{equation}\label{e:fde-white}
  \begin{split}
     &\left(\partial_t^{\beta}+\frac{\nu}{2}\left(-\Delta\right)^{\alpha / 2}\right) u(t, x)=I_{t}^{\gamma}\bigg[ f(t,x,u(t, x))\\
  &\qquad -\sum_{j=1}^{d} \frac{\partial}{\partial x_j} q_j(t,x,u(t, x,\omega))+ \sigma(t,x,u(t, x)) W_{t,x}+ \int_{E}h(t,x,u(t, x);\xi)M_{t,x}(\ud\xi)  \bigg].
  \end{split}
\end{equation}
Similarly, for the pure jump L\'evy noise, we shall consider
\begin{equation}\label{e:de-solution2}
  \begin{split}
     u(t,x)= & J_0(t,x)+\int_{0}^{t}\int_{\R^d}p(t-s,x-y)f(s,y,u(s,y))\ud s\ud y\\
     &+\sum_{j=1}^{d} \int_{0}^{t} \int_{\R^d} \left[\frac{\partial}{\partial y_j } p(t-s,x-y)\right]  q_j(s,y,u(s, y,\omega))\ud s\ud y \\
   &+ \int_{0}^{t}\int_{\R^d}\int_{E} p(t-s,x-y) h(s,y,u(s,y);\xi) M(\ud s,\ud y,\ud\xi),
  \end{split}
\end{equation}
which is a mild formulation of the following (formal) equation
\begin{align}\label{e:fde-jump}
 & \left(\partial_t^{\beta}+\frac{\nu}{2}\left(-\Delta\right)^{\alpha / 2}\right) u(t, x)\notag\\
  &\qquad=I_{t}^{\gamma} \bigg[ f(t,x,u(t, x))-\sum_{j=1}^{d} \frac{\partial}{\partial x_j} q_j(t,x,u(t, x,\omega))+ \int_{E}h(t,x,u(t, x);\xi)M_{t,x}(\ud\xi)  \bigg].
\end{align}

 Now  we give the precise definitions of the global and local solutions.

 \begin{definition}[Global solution]
   We say that an $\{\cF_t\}$-adapted stochastic process $u(t,x):[0,T]\times\R^d\mapsto\R$ is a global solution of \eqref{e:fde-white} for L\'evy space-time white nosie (resp. \eqref{e:fde-jump} for pure jump L\'evy noise), if for any $T>0$, $\{u(t,x)\}_{t\in[0,T]}$ satisfies the integral equation \eqref{e:de-solution} (resp. \eqref{e:de-solution2}) for each $t\in[0,T]$. Furthermore, we say that the solution is unique, if $u$ and $v$ are any two global solutions, then for all $(t,x)\in[0,T]\times\R^d$, $u(t,x)=v(t,x)$ almost surely.
 \end{definition}

 \begin{definition}[Local solution]
We  call an $\{\cF_t\}$-adapted stochastic process $u(t,x):[0,T]\times\R^d\to\R$ which is c\`adl\`ag in time variable $t\in[0,T]$ a local solution of \eqref{e:fde-white} for L\'evy space-time white nosie (resp. \eqref{e:fde-jump} for pure jump L\'evy noise), if there exists an $\{\cF_t\}$-stopping time $\tau:\Omega\to(0,T]$ such that $\{u(t,x)\}_{t\le\tau}$ satisfies the integral equation \eqref{e:de-solution} (resp. \eqref{e:de-solution2}). Moreover, we say the uniqueness holds for local solution if there exists another local solution $\tilde{u}$ with a stopping time $\tilde{\tau}$, then  we have $u(t,x)=\tilde{u}(t,x)$ almost surely, for all $(t,x)\in[0,\tau\wedge\tilde{\tau}]\times\R^d$.
 \end{definition}
% Clearly, a local solution becomes  a global solution  if  $\tau=T$ almost surely.

We are ready to state the main results of this paper.

\begin{theorem}[L\'evy space-time white noise]\label{th:1}
Let the parameters $(\alpha,\beta,\gamma,d)$ satisfy the inequality
\begin{equation}\label{e:con-th1}
  2\alpha+\min\left\{\frac{\alpha}{\beta}(2\gamma-1),-2 \right\}>d.
\end{equation}
Assume that there exist positive functions $g_1,g_2,g_3\in L^1(\R^d)$ such that the following growth conditions:
\begin{equation*}%\label{e:con-f}
  |f(t,x,z)|^2 +\sum_{j=1}^{d} |q_j(t,x,z)|^2 \le g_1(x)+ C|z|^2,
\end{equation*}
\begin{equation}\label{e:con-sigma-h}
  |\sigma(t,x,z)|^2+\int_{\E}|h(t,x,z;\xi)|^2\mu(\ud\xi)  \le g_2(x)+ C|z|^2,
\end{equation}
and Lipschitz conditions
\begin{align}\label{e:con-f2}
   &|f(t,x,z_1)-f(t,x,z_2)|^2+\sum_{j=1}^{d} |q_j(t,x,z_1)-q_j(t,x,z_2)|^2 \le \left[g_3(x)+C(|z_1|^2+|z_2|^2)\right]|z_1-z_2|^2,
\end{align}
\begin{equation*}%\label{e:con-sigma-h2}
  |\sigma(t,x,z_1)-\sigma(t,x,z_2)|^2+\int_{\E}|h(t,x,z_1;\xi)-h(t,x,z_2;\xi)|^2\mu(\ud\xi)  \le C|z_1-z_2|^2
\end{equation*}
hold for all $(t,x)\in[0,\infty)\times\R^d$ and $z_1,z_2,z_3\in\R$. Then for every $\cF_0$-measurable $u_0,u_1:\R^d\times\Omega\to\R$ with
\begin{equation*}
\begin{cases}
 \displaystyle \E\left[\int_{\R^d}|u_0(x,\cdot)|^2\ud x\right]<\infty, & \mbox{if } \beta\in(0,1], \vspace{0.2cm}\\
  \displaystyle \E\left[\int_{\R^d}|u_0(x,\cdot)|^2\ud x\right]+ \E\left[\int_{\R^d}|u_1(x,\cdot)|^2\ud x\right]<\infty , & \mbox{if } \beta\in(0,2).
\end{cases}
\end{equation*}
there exist a unique local solution to \eqref{e:de-solution} with the following property
\begin{equation*}
  \E \left[\Vert u(t\wedge\tau,\cdot)\Vert_2^2\right]<\infty,
\end{equation*}
for all $t\in[0,T]$.  Furthermore, there exists a predictable modification for the solution $u(t)$ of \eqref{e:de-solution}.
\end{theorem}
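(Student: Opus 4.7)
The plan is to construct the local solution by a Picard iteration after truncating the super-linear terms in $f$ and $q_j$, and then pass to a stopping-time argument to recover the local solution of the original equation. For each integer $n\ge 1$ I would fix a smooth cut-off $\phi_n:\R\to[0,1]$ with $\phi_n\equiv 1$ on $[-n,n]$ and $\phi_n\equiv 0$ outside $[-n-1,n+1]$, and replace $f,q_j$ by the truncated coefficients $f_n(t,x,z):=f(t,x,z\phi_n(z))$ and $q_{j,n}(t,x,z):=q_j(t,x,z\phi_n(z))$. Under the hypotheses of the theorem these truncated coefficients satisfy linear-growth and global Lipschitz conditions with $n$-dependent constants, while $\sigma$ and $h$ already satisfy such conditions globally. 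Starting from $u^{(0)}:=J_0$ I define the Picard iterates $u^{(k+1)}$ by substituting $u^{(k)}$ into the right-hand side of \eqref{e:de-solution} with the truncated coefficients; the resulting sequence is $\{\cF_t\}$-adapted and well defined in $L^2(\Omega\times\R^d)$ at each time.

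The core is the a priori estimate in $L^2(\Omega\times\R^d)$. Splitting the right-hand side of \eqref{e:de-solution}, I bound each piece in turn: for the drift term by Minkowski's inequality in time and Young's convolution inequality in space, producing a time-kernel involving $\|p(t-s,\cdot)\|_1$ controlled by Lemma \ref{le:upb-fmsl} at $p=1$; for the $q_j$ term by the analogous argument yielding $\|\partial_{y_j}p(t-s,\cdot)\|_1$ controlled by Lemma \ref{le:upb-fms-deri} at $p=1$; for the Gaussian stochastic integral by Walsh's isometry, producing $\|p(t-s,\cdot)\|_2^2$ bounded by Lemma \ref{le:upb-fmsl} at $p=2$; and for the compensated Poisson integral by the martingale-measure variance identity combined with the bound on $\int_E|h|^2\mu(\ud\xi)$ in \eqref{e:con-sigma-h}. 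The combination of the spatial-integrability conditions of these two lemmas and the local-in-time integrability of the resulting kernels is precisely what the parameter constraint \eqref{e:con-th1} encodes, with the ``$-2$'' in the minimum capturing the extra singularity of $\partial_{y_j}p$ at $r=0$ and $\alpha(2\gamma-1)/\beta$ capturing the $\gamma$-dependent time exponent. After applying the Lipschitz hypotheses to consecutive differences, a Gronwall-type estimate for convolution kernels yields the Cauchy property of $\{u^{(k)}\}$ in $L^\infty([0,T];L^2(\Omega\times\R^d))$, whose limit $u_n$ solves the truncated equation, and uniqueness at the truncated level follows from the same estimate.

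To return to the original equation, I set $\tau_n:=\inf\{t\in[0,T]:\|u_n(t,\cdot)\|_2\ge n\}$, which is an $\{\cF_t\}$-stopping time by the c\`adl\`ag-adaptedness of $t\mapsto u_n(t,\cdot)$ and the lower semicontinuity of the $L^2$-norm. On $[0,\tau_n]$ the truncation is inactive, so $u_n$ satisfies \eqref{e:de-solution}; choosing $\tau:=\tau_{n_0}$ for some fixed $n_0$ yields the local solution. Uniqueness of the local solution follows from the same Gronwall argument applied to the difference of two candidates on $[0,\tau\wedge\tilde\tau]$, while the predictable modification is obtained within Walsh's framework: the deterministic convolutions are continuous in $t$ and the two stochastic convolutions admit predictable versions by standard martingale-measure theory.

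The main obstacle I anticipate is controlling the conservation-law integral with kernel $\partial_{y_j}p$, whose $L^2$ time-profile is strictly more singular near $r=0$ than that of $p$ itself; this is precisely the source of the stronger requirement $2\alpha-2>d$ hidden in the ``$-2$'' inside the minimum of \eqref{e:con-th1} and of the space-integrability bound in Lemma \ref{le:upb-fms-deri}. A secondary delicate point is ensuring that each Picard iterate admits a c\`adl\`ag, $\{\cF_t\}$-predictable version so that the stochastic integrals at the next step are well defined, which is achieved by invoking Walsh's theory at each step.
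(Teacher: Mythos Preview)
Your overall architecture is close to the paper's, but there is a genuine gap in the truncation step. You truncate \emph{pointwise} in the state variable, setting $f_n(t,x,z):=f(t,x,z\phi_n(z))$, and then try to remove the truncation via the stopping time $\tau_n:=\inf\{t:\|u_n(t,\cdot)\|_2\ge n\}$. These two devices do not match: on $[0,\tau_n]$ you only know $\|u_n(t,\cdot)\|_2\le n$, which in no way forces $|u_n(t,y)|\le n$ for a.e.\ $y$. Hence $\phi_n(u_n(t,y))$ need not equal $1$ and $f_n(t,y,u_n(t,y))\ne f(t,y,u_n(t,y))$ in general, so your assertion ``on $[0,\tau_n]$ the truncation is inactive, so $u_n$ satisfies \eqref{e:de-solution}'' is false. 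A stopping time based on $\|u_n(t,\cdot)\|_\infty$ would repair the logic but is unavailable here, since the solution is only constructed as an $L^2(\R^d)$-valued process and there is no $L^\infty$ control.

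The paper avoids this by truncating at the level of the $L^2$-norm rather than pointwise: it uses the radial projection $\lambda_n u:=u$ if $\|u\|_2\le n$ and $\lambda_n u:=\tfrac{n}{\|u\|_2}u$ otherwise, which is a $1$-Lipschitz map on $L^2(\R^d)$ with $\|\lambda_n u\|_2\le n$. With this choice the same stopping time $\tau_n$ genuinely deactivates the truncation. The paper then runs a Banach fixed-point argument in the exponentially weighted norm $\|u\|_{2,\kappa}=\sup_{t\le T}e^{-\kappa t}\E[\|u(t,\cdot)\|_2^2]^{1/2}$ (equivalent to your Picard/Gronwall scheme), using exactly the kernel bounds from Lemmas~\ref{le:upb-fmsl} and~\ref{le:upb-fms-deri} that you identified. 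Your analysis of the kernel integrability and of how \eqref{e:con-th1} arises is correct; only the truncation mechanism needs to be replaced by the $L^2$-ball projection.
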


\begin{theorem}[Pure jump L\'evy noise]\label{th:purejump}
Let $p\in[1,2]$ and  the parameters $(\alpha,\beta,\gamma,d,p)$ satisfy the inequality
  \begin{equation}\label{e:con-mainth2}
    p\alpha+\min\left\{\frac{\alpha}{\beta}(p\gamma-p+1),-p \right\}>d(p-1).
  \end{equation}
  Assume that there exist positive functions $a_1\in L^p(\R^d),a_2\in L^1(\R^d),a_3\in L^p(\R^d)$ such that the following growth conditions:
\begin{equation}\label{e:con-f-p}
  |f(t,x,z)|+\sum_{j=1}^{d} |q_j(t,x,z)| \le a_1(x)+ C|z|,
\end{equation}
\begin{equation}\label{e:con-sigma-h-p}
  \int_{\E}|h(t,x,z;\xi)|^p\mu(\ud\xi)  \le a_2(x)+ C|z|^p,
\end{equation}
and Lipschitz conditions
\begin{align}\label{e:con-f2-p}
   &|f(t,x,z_1)-f(t,x,z_2)|+\sum_{j=1}^{d} |q_j(t,x,z_1)-q_j(t,x,z_2)| \le \big[a_3(x)+C(|z_1|+|z_2|)\big] |z_1-z_2|,
\end{align}
\begin{equation}\label{e:con-h2-p}
  \int_{\E}|h(t,x,z_1;\xi)-h(t,x,z_2;\xi)|^p\mu(\ud\xi)  \le C|z_1-z_2|^p,
\end{equation}
hold for all $(t,x)\in[0,T]\times\R^d$ and $z_1,z_2,z_3\in\R$. Let $\cF_0$-measurable $u_0,u_1:\R^d\times\Omega\to\R$ satisfy
\begin{equation}\label{e:initial-p}
\begin{cases}
  \displaystyle \E\left[\int_{\R^d}|u_0(x,\cdot)|^p\ud x\right]<\infty, & \mbox{if } \beta\in(0,1] \vspace{0.2cm}\\
  \displaystyle \E\left[\int_{\R^d}|u_0(x,\cdot)|^p\ud x\right]+ \E\left[\int_{\R^d}|u_1(x,\cdot)|^p\ud x\right]<\infty , & \mbox{if } \beta\in(0,2).
\end{cases}
\end{equation}
Then  there exist a unique local solution to \eqref{e:de-solution2} with the following property
\begin{equation*}
  \E \left[\Vert u(t\wedge\tau,\cdot)\Vert_p^p\right]<\infty,
\end{equation*}
for all $t\in[0,T]$.  Furthermore, there exists a predictable modification for the solution $u(t)$ of \eqref{e:de-solution2}.
\end{theorem}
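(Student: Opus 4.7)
My plan is a Picard iteration in the Banach space $C([0,T_0]; L^p(\Omega; L^p(\R^d)))$ for the mild equation \eqref{e:de-solution2}, combined with a truncation of the nonlinearities $f,q_j$ to obtain globally Lipschitz coefficients, and a stopping-time localization to return to the original equation. Concretely, for each $N\ge 1$ I would set $\phi_N(z):=(-N)\vee z\wedge N$ and work with $f_N(s,y,z):=f(s,y,\phi_N(z))$ and $q_{j,N}(s,y,z):=q_j(s,y,\phi_N(z))$, leaving $h$ unchanged since it is already globally Lipschitz by \eqref{e:con-h2-p}. After truncation the Lipschitz factor in \eqref{e:con-f2-p} becomes $a_3(x)+2CN$, which is bounded-plus-$L^p$. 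The iterates $u^{(n+1)}$ are then defined by substituting $u^{(n)}$ on the right-hand side of \eqref{e:de-solution2} with $(f_N,q_{j,N})$ in place of $(f,q_j)$, starting from $u^{(0)}(t,x):=J_0(t,x)$.

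For the uniform-in-$n$ bound on $\E\|u^{(n+1)}(t,\cdot)\|_p^p$, I would split the right-hand side into its four pieces. For the two Lebesgue convolution terms I would apply Minkowski's integral inequality in $s$ followed by Young's convolution inequality, using $\|p(t-s,\cdot)*f(s,\cdot,u)\|_p\le\|p(t-s,\cdot)\|_1\|f(s,\cdot,u)\|_p$ and $\|\partial_{y_j}p(t-s,\cdot)*q_j(s,\cdot,u)\|_p\le\|\partial_{y_j}p(t-s,\cdot)\|_1\|q_j(s,\cdot,u)\|_p$, together with \eqref{e:con-f-p} and $\|a_1\|_p<\infty$. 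For the stochastic term I would invoke Proposition \ref{prop:boundpoisson} and Fubini's theorem to obtain
\begin{equation*}
\int_{\R^d}\E\left|\int_0^t\!\!\int_{\R^d}\!\!\int_E p(t-s,x-y) h(s,y,u^{(n)};\xi)\,M(ds,dy,d\xi)\right|^p\!dx \le C\!\int_0^t \|p(t-s,\cdot)\|_p^p\!\left(\|a_2\|_1+C\,\E\|u^{(n)}(s,\cdot)\|_p^p\right)\!ds,
\end{equation*}
after applying \eqref{e:con-sigma-h-p}. The power laws $\|p(t,\cdot)\|_p^p=Ct^{p(\beta+\gamma-1)+\beta d(1-p)/\alpha}$ and $\|\partial_{y_j}p(t,\cdot)\|_1=Ct^{\beta+\gamma-1-\beta/\alpha}$ supplied by Lemmas \ref{le:upb-fmsl} and \ref{le:upb-fms-deri} combine with the two constituent inequalities packaged inside the $\min$ in \eqref{e:con-mainth2} (the second of which forces $\alpha>1$, ensuring existence and time-integrability of $\|\partial_{y_j}p\|_1$) to make every resulting $ds$-integral finite. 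Running the identical template with the Lipschitz bounds \eqref{e:con-f2-p}, \eqref{e:con-h2-p} in place of the growth bounds gives
\begin{equation*}
\E\|u^{(n+1)}(t,\cdot)-u^{(n)}(t,\cdot)\|_p^p \le C_N \int_0^t \Phi(t-s)\,\E\|u^{(n)}(s,\cdot)-u^{(n-1)}(s,\cdot)\|_p^p\,ds
\end{equation*}
with $\Phi\in L^1(0,T)$, and a standard Picard/Gronwall argument yields a unique fixed point $u_N$ of the truncated equation on $[0,T]$.

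To return to the original, non-truncated equation, I would define $\tau_N:=\inf\{t\in[0,T]:\|u_N(t,\cdot)\|_p>N\}\wedge T$ after extracting the c\`adl\`ag-in-$t$, $\{\cF_t\}$-predictable modification of $u_N$ furnished by Walsh's construction of the stochastic integral against $M$ together with continuity of the Lebesgue pieces. On $[0,\tau_N]$ one has $\phi_N(u_N)=u_N$, so $u_N$ solves \eqref{e:de-solution2} itself; uniqueness of truncated solutions gives the compatibility $u_{N+1}|_{[0,\tau_N]}=u_N|_{[0,\tau_N]}$ almost surely, and on the $\cF_0$-event $\{\|u_0\|_p\le N-1\}$ one has $\tau_N>0$ almost surely by $L^p$-continuity of $J_0$ at $0^+$ together with the vanishing at $t=0$ of the three integral terms (again by the power-law kernel bounds). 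Exhausting $\Omega=\bigcup_N\{\|u_0\|_p\le N\}$ produces the advertised local solution $(u,\tau)$ with $\E\|u(t\wedge\tau,\cdot)\|_p^p<\infty$; uniqueness of any two local solutions follows by comparing them to the same $N$-truncated solution on $\{t\le\tau\wedge\tilde\tau,\;\|u(t,\cdot)\|_p\vee\|\tilde u(t,\cdot)\|_p\le N\}$ and sending $N\to\infty$.

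The principal obstacle is the quadratic Lipschitz factor $a_3(x)+C(|z_1|+|z_2|)$ in \eqref{e:con-f2-p}: without pointwise (or $L^\infty$) control of the candidates, the contraction constant cannot be kept finite, forcing the truncation. The attendant delicacy is verifying $\tau_N>0$ almost surely, which reduces via the integral representation to the precise power-law integrability of $\|p(t,\cdot)\|_p$ and $\|\partial_{y_j}p(t,\cdot)\|_1$ at $0^+$ afforded by Lemmas \ref{le:upb-fmsl} and \ref{le:upb-fms-deri} — and it is precisely to secure this integrability simultaneously with the stochastic-term bound that \eqref{e:con-mainth2} is stated as a $\min$ of two inequalities.
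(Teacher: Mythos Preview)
Your overall architecture---truncate the nonlinearities to get globally Lipschitz coefficients, run a fixed-point argument in $L^p(\Omega;L^p(\R^d))$ using the kernel bounds of Lemmas~\ref{le:upb-fmsl} and~\ref{le:upb-fms-deri} together with Proposition~\ref{prop:boundpoisson}, then localize via stopping times---is exactly the paper's plan. The difference, and the genuine gap, lies in \emph{which} truncation you use.

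You truncate \emph{pointwise} via $\phi_N(z)=(-N)\vee z\wedge N$, and then define the stopping time $\tau_N:=\inf\{t:\|u_N(t,\cdot)\|_p>N\}$. Your detruncation step reads ``on $[0,\tau_N]$ one has $\phi_N(u_N)=u_N$,'' but this is false: $\|u_N(t,\cdot)\|_p\le N$ does \emph{not} imply $|u_N(t,x)|\le N$ for a.e.\ $x$, so the pointwise cutoff can be active even when the $L^p$ norm is small. Consequently $u_N$ need not solve \eqref{e:de-solution2} on $[0,\tau_N]$, and the compatibility $u_{N+1}|_{[0,\tau_N]}=u_N|_{[0,\tau_N]}$ breaks down. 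An $L^\infty$-based stopping time would repair the consistency but you have no a~priori $L^\infty$ control (only $L^p$), so showing $\tau_N>0$ would then fail.

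The paper sidesteps this by truncating \emph{in $L^p$} rather than pointwise: it applies the radial projection
\[
\pi_n u(t,x)=\begin{cases}u(t,x),&\|u(t,\cdot)\|_p\le n,\\ \dfrac{n}{\|u(t,\cdot)\|_p}\,u(t,x),&\|u(t,\cdot)\|_p>n,\end{cases}
\]
inside $f$ and $q_j$, so that $\|\pi_n u\|_p\le n$ always and $\pi_n u=u$ precisely when $\|u\|_p\le n$. This makes the $L^p$-norm stopping time $\tau_n=\inf\{t:\|u_n(t,\cdot)\|_p\ge n\}$ exactly the right object, and the patching $u_m|_{[0,\tau_n)}=u_n|_{[0,\tau_n)}$ goes through. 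Apart from this point (and the paper's use of an exponentially weighted norm $e^{-\kappa t}$ in place of your Picard--Gronwall iteration, which is cosmetic), your estimates and use of the kernel lemmas match the paper's.
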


\begin{remark}\label{re:comp-white}
   Let us compare the above theorem with some known results:
   \begin{enumerate}
     \item For the classical stochastic Burgers equation (i.e., $\alpha=2$, $\beta=1$ and $\gamma=0$) with L\'evy white noise, condition \eqref{e:con-th1} is equivalent to
     \begin{equation*}
       d<2,
     \end{equation*}
      which agrees with Theorem 3.1 in \cite{Truman2003Stochastic}.
      \item For the fractional stochastic Burgers equation (i.e., $\alpha\in(0,2]$, $\beta=1$ and $\gamma=0$) with L\'evy white noise, condition \eqref{e:con-th1} is equivalent to
     \begin{equation*}
       \alpha>\frac{d+2}{2},
     \end{equation*}
      which coincides with Theorem 3.1 in \cite{Truman2005Fractal}, see also \cite[Theorem 3.1]{Truman2006On} and \cite[Theorem 2.3]{Jacob2010Solving}.
      \item For the fractional stochastic Burgers equation (i.e., $\alpha\in(0,2]$, $\beta=1$ and $\gamma=0$) with pure jump noise, condition \eqref{e:con-mainth2} is equivalent to
          \begin{equation*}
            p\alpha-p>d(p-1),
          \end{equation*}
          which coincides with Theorem 3.1 in \cite{Wu2012On}.
   \end{enumerate}
\end{remark}

Theorems \ref{th:1} and \ref{th:purejump} only investigate the existence and uniqueness of the local solutions. The difficulties for the global solutions mainly come from the polynomial nonlinearity on $f$ and $q_i$, see \eqref{e:con-f2} and \eqref{e:con-f2-p}. If we strengthen the conditions \eqref{e:con-f2} and \eqref{e:con-f2-p}, we can obtain the existence and uniqueness of the global solutions ‌as following theorem shows.

\begin{theorem}[Global solution]\label{th:global}
\begin{enumerate}
  \item If we change the condition \eqref{e:con-f2} to
  \begin{equation*}%\label{e:global-fq}
    |f(t,x,z_1)-f(t,x,z_2)|^2+\sum_{j=1}^{d} |q_j(t,x,z_1)-q_j(t,x,z_2)|^2 \le C|z_1-z_2|^2,
  \end{equation*}
  and assume that the remaining conditions in Theorem \ref{th:1} hold, then there exists a unique global solution to \eqref{e:de-solution} with the following property
\begin{equation*}
  \E \left[\Vert u(t,\cdot)\Vert_2^2\right]<\infty,
\end{equation*}
for all $t\in[0,T]$.

  \item If we change the condition \eqref{e:con-f2-p} to
  \begin{equation*}%\label{e:global-fq}
    |f(t,x,z_1)-f(t,x,z_2)|+\sum_{j=1}^{d} |q_j(t,x,z_1)-q_j(t,x,z_2)| \le C |z_1-z_2|,
  \end{equation*}
  and assume that the remaining conditions in Theorem \ref{th:purejump} hold, then there exists a unique global solution to \eqref{e:de-solution2} with the following property
\begin{equation*}
  \E \left[\Vert u(t,\cdot)\Vert_p^p\right]<\infty,
\end{equation*}
for all $t\in[0,T]$.
\end{enumerate}
\end{theorem}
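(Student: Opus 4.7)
The strategy is to run a Picard iteration on the mild equations \eqref{e:de-solution} (for part 1) and \eqref{e:de-solution2} (for part 2), converting the truncation/stopping-time argument used in the proofs of Theorems \ref{th:1}--\ref{th:purejump} into a direct weakly-singular Gronwall estimate. The strengthened hypotheses replace the polynomial Lipschitz growth in \eqref{e:con-f2} (resp.\ \eqref{e:con-f2-p}) by a genuine linear Lipschitz bound, so the prefactor in the contraction step no longer depends on the size of the iterates and no stopping time is required.

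Concretely, for part 1 I set $u^{(0)}(t,x):=J_0(t,x)$ and define $u^{(n+1)}$ by substituting $u^{(n)}$ into the right-hand side of \eqref{e:de-solution}. I then bound the squared $L^2_x$-norm of each of the five resulting terms: the two deterministic convolutions by Minkowski's and Young's inequalities combined with Fubini, the Gaussian stochastic integral by Walsh's $L^2$-isometry, and the Poisson-martingale integral by Proposition \ref{prop:boundpoisson} with $p=2$. Taking expectations and applying the growth bounds \eqref{e:con-sigma-h} on $\sigma,h$ together with the new linear Lipschitz bound on $f,q_j$ yields an estimate of the form
\begin{equation*}
\E\bigl\|u^{(n+1)}(t,\cdot)\bigr\|_2^2 \le C_T + C\int_0^t K(t-s)\,\E\bigl\|u^{(n)}(s,\cdot)\bigr\|_2^2\,\ud s,
\end{equation*}
where $K(r)$ is a finite sum of power functions $r^{\kappa_i}$ coming from $\|p(r,\cdot)\|_2^2$ and $\|\partial_j p(r,\cdot)\|_2^2$. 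By Lemmas \ref{le:upb-fmsl} and \ref{le:upb-fms-deri} with $p=2$, the exponents $\kappa_i$ are strictly greater than $-1$ precisely under condition \eqref{e:con-th1}, so $K\in L^1([0,T])$ for every $T>0$. A Henry-type weakly-singular Gronwall inequality then yields the uniform bound $\sup_{n,\,t\le T}\E\|u^{(n)}(t,\cdot)\|_2^2<\infty$. Applying the same estimates to $u^{(n+1)}-u^{(n)}$ gives
\begin{equation*}
\E\bigl\|u^{(n+1)}(t,\cdot)-u^{(n)}(t,\cdot)\bigr\|_2^2 \le C\int_0^t K(t-s)\,\E\bigl\|u^{(n)}(s,\cdot)-u^{(n-1)}(s,\cdot)\bigr\|_2^2\,\ud s,
\end{equation*}
and iterating together with the standard estimate on $n$-fold convolutions of an $L^1$ power kernel shows that $\{u^{(n)}\}$ is Cauchy in $C([0,T];L^2(\Omega\times\R^d))$. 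The limit $u$ is the desired global solution; uniqueness follows from the identical Gronwall argument applied to the difference of two candidate solutions, and a predictable c\`adl\`ag modification is produced as in Walsh \cite{Walsh1986Anintroduction}.

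Part 2 follows the same scheme with $L^2$ replaced by $L^p$ for $p\in[1,2]$: the Gaussian term is absent, Proposition \ref{prop:boundpoisson} handles the Poisson integral in $L^p$, the deterministic convolutions are again bounded by Minkowski's and Young's inequalities, and Lemmas \ref{le:upb-fmsl}--\ref{le:upb-fms-deri} with the general exponent $p$ show that the analogous kernel lies in $L^1([0,T])$ precisely under \eqref{e:con-mainth2}. The principal technical obstacle in either part is the weakly-singular Gronwall step: because the spatial kernels $\|p(r,\cdot)\|_p^p$ and $\|\partial_j p(r,\cdot)\|_p^p$ are only power-integrable near $r=0$ with a fractional exponent $\kappa\in(-1,0)$, the iteration must be closed through a Mittag-Leffler-type summability argument rather than by choosing a small interval on which the contraction constant is less than one. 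Once that is done, the Banach fixed-point argument extends globally and there is no need to patch solutions across subintervals.
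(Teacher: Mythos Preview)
Your proposal is correct. It differs from the paper's route, but only at the level of which fixed-point mechanism is used, and the difference is worth noting.

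The paper gives no detailed proof of Theorem \ref{th:global}; it simply says the result follows ``by the similar arguments in the proofs of Theorems \ref{th:1} and \ref{th:purejump}''. Concretely, the intended argument is: drop the truncation $\pi_n$ (resp.\ $\lambda_n$) altogether and run Steps 1--2 of those proofs directly on the untruncated map $\mathcal T$ (resp.\ $\mathcal J$), using the exponentially weighted norm $\|\cdot\|_{p,\kappa}$ from \eqref{e:de=norm2}. Under the strengthened linear Lipschitz hypothesis the contraction estimates in Step~2 no longer pick up the factor $(\|a_3\|_p^p+2n^p)$ coming from the truncation level, so the same choice of large $\kappa$ makes $\mathcal T$ a global contraction on $B_{p,\kappa}$ for every $T>0$, and the fixed point is the global solution.

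Your route replaces the weighted-norm trick by an explicit Picard scheme plus a Henry-type singular Gronwall/Mittag-Leffler bound. This is entirely legitimate and is the standard alternative: the weighted norm and the Mittag-Leffler summability argument are two bookkeeping devices for the same weakly singular convolution inequality. The paper's approach is shorter here because the weighted-norm machinery is already in place from the local proofs; your approach has the advantage of making the role of the kernel exponents from Lemmas \ref{le:upb-fmsl}--\ref{le:upb-fms-deri} transparent. One small imprecision: for the deterministic convolutions $\mathcal T_1,\mathcal T_2$ the paper applies those lemmas with exponent $1$ (via Young's inequality with the $L^1$-norm of the kernel), not with the target exponent $p$; only the Poisson term $\mathcal T_3$ uses the lemmas at exponent $p$. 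This does not affect the validity of your argument, but it changes which powers actually appear in your kernel $K$.
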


\begin{remark}\label{re:comp-global}
  According to the Theorem \ref{th:global}, for the fractional stochastic Burgers equation (i.e., $\alpha\in(0,2]$, $\beta=1$ and $\gamma=0$) with pure jump noise, there exists a global solution if
  \begin{equation*}
    p\alpha-p>d(p-1).
  \end{equation*}
  This condition differs from that in \cite[Corollary 3.2]{Wu2012On} where the nonlinearity term (i.e., $q_i$ in \eqref{e:fde}) is assumed to be zero, thereby resulting in a more relaxed condition.
\end{remark}

\section{Proofs of main results}\label{se:proof}

In this section, we will prove Theorems \ref{th:1} and \ref{th:purejump}. Since Theorem \ref{th:global} been easily verified by the similar arguments in the proofs of Theorems \ref{th:purejump} and \ref{th:purejump}, so we omit the details. Now, we first show Theorem \ref{th:purejump}.

\begin{proof}[Proof of Theorem \ref{th:purejump}] Due to the use of Young’s convolution inequality and H\"older's inequality, we first deal with the case $p\in(1,2]$. To this end, our proof is divided into the following four steps.

{\em Step 1.}
For $p\in[1,2]$ and for arbitrarily fixed $T>0$, let us denote by $B_{T,p}$ the collection of all $\cF_t$-adapted functions $u:[0,T]\times\R^d\times\Omega\mapsto u(t,\cdot,\omega)\in L^p(\R^d)$ satisfying
\begin{equation}\label{e:de-norm1}
  \Vert u\Vert_{T,p}:=\sup_{t\in[0,T]} \E\left[\Vert u(t,\cdot) \Vert^p_p\right]^{1/p}<\infty.
\end{equation}
It is clear that $\Vert \cdot\Vert_{T,p}$ is a norm and $B_{T,p}$ endowed with $\Vert \cdot\Vert_{T,p}$ is a Banach space.

For each $n\in\N$, let $\pi_n$ be the mapping from $L^p(\R^d)$ to $L^p(\R^d)$, i.e.,
\begin{equation*}
  \pi_n u(t,x)=
  \begin{cases}
    u(t,x), & \mbox{if } \Vert u\Vert_p \le n \\
    \frac{n}{\Vert u(t,\cdot)\Vert_p} u(t,x), & \mbox{if } \Vert u\Vert_p \ge n.
  \end{cases}
\end{equation*}
Clearly, we have $\pi_n u\le n$ and the mapping $\pi_n:L^p(\R^d)\to L^p(\R^d)$ is Lipschitz continuous, i.e.,
\begin{equation}\label{e:pin-contract}
  \left\Vert\pi_n u-\pi_n v\right\Vert_p \le \left\Vert u-v\right\Vert_p,\quad \forall u,v\in L^p(\R^d),
\end{equation}
which implies that $\pi_n$ is also a contraction. Note that the pure jump L\'evy noise does not contain Gaussian term in \eqref{e:de-solution2}. For any fixed $n$, the truncated stochastic integral equation associated with $\pi_n$ is
\begin{equation}\label{e:de-truncatedSIE}
\begin{split}
   u(t,x)= & J_0(t,x)+ \int_{0}^{t}\int_{\R^d} p(t-s,x-y)f(s,y,\pi_n u(s,y)) \ud s \ud y\\
   &+\sum_{j=1}^{d} \int_{0}^{t} \int_{\R^d} \left[\frac{\partial}{\partial y_j } p(t-s,x-y)\right]  q_j(s,y,\pi_n u(s, y))\ud s\ud y \\
   &+ \int_{0}^{t}\int_{\R^d}\int_{E} p(t-s,x-y) h(s,y,\pi_n u(s,y);\xi)M(\ud s,\ud y,\ud\xi).
\end{split}
\end{equation}

Now we define a mapping
$\mathcal T$ on $B_{T,p}$:
\begin{equation}\label{e:de-cT}
  (\mathcal T u)(t,x) := J_0(t,x) + \sum_{i=1}^{3}(\mathcal T_i u)(t,x),
\end{equation}
with
\begin{align*}
  (\mathcal T_1 u)(t,x) & =  \int_{0}^{t}\int_{\R^d} p(t-s,x-y)f(s,y,\pi_n u(s,y)) \ud s \ud y, \\
  (\mathcal T_2 u)(t,x) & =  \sum_{j=1}^{d} \int_{0}^{t} \int_{\R^d} \left[\frac{\partial}{\partial y_j } p(t-s,x-y)\right]  q_j(s,y,\pi_n u(s, y))\ud s\ud y, \\
  (\mathcal T_3 u)(t,x) & = \int_{0}^{t}\int_{\R^d}\int_{E} p(t-s,x-y) h(s,y,\pi_n u(s,y);\xi)M(\ud s,\ud y,\ud\xi).
\end{align*}
We will show that the operator $\cT$ maps $B_{T,p}$ into itself in this step.

By \eqref{E:J0} and triangle inequality, when $\beta\in(1,2)$, we have
\begin{align*}
    \Vert J_0(t,\cdot)\Vert_p &=  \left\Vert \int_{\R^d} u_0(y)Z^*(t,\cdot-y)\ud y +\int_{\R^d} u_1(y)Z(t,\cdot-y) \ud y \right\Vert_p \\
    &\le \left\Vert \int_{\R^d} u_0(y)Z^*(t,\cdot-y) \ud y \right\Vert_p  +\left\Vert \int_{\R^d} u_1(y)Z(t,\cdot-y) \ud y \right\Vert_p.
\end{align*}
Then, apply Young's convolution inequality to obtain that
\begin{align*}
    \Vert J_0(t,\cdot)\Vert_p &\le \left\Vert u_0(\cdot) \right\Vert_p \int_{\R^d}  Z^*(t,x) \ud x   +\left\Vert u_1(\cdot)\right\Vert_p \int_{\R^d} Z(t,x) \ud x \\
   & = \left\Vert u_0(\cdot) \right\Vert_p \cF Z^*(t,\cdot)(0)   +\left\Vert u_1(\cdot)\right\Vert_p  \cF Z(t,\cdot)(0) \\
   &= \left\Vert u_0(\cdot) \right\Vert_p+ t \left\Vert u_1(\cdot)\right\Vert_p,
\end{align*}
where we have used \eqref{E:FZ} and \eqref{E:FZ*} in the last equality. The case $\beta\in(0,1]$ is similar and easier, so we can obtain that, for all $\beta\in(0,2)$
\begin{equation}\label{e:esti-j0}
  \Vert J_0(t,\cdot)\Vert_p<\infty,
\end{equation}
if the condition \eqref{e:initial-p} holds.

From \eqref{e:con-f-p} and Minkowski inequality, we get
\begin{align*}
    \Vert (\mathcal T_1 u)(t,\cdot)\Vert_p &=\left\Vert \int_{0}^{t}\int_{\R^d} p(t-s,\cdot-y)f(s,y,\pi_n u(s,y)) \ud s \ud y \right\Vert_p\\
   &\le C \left\Vert \int_{0}^{t}\int_{\R^d} |p(t-s,\cdot-y)|(a_1(y)+ |\pi_n u(s,y)|) \ud s \ud y \right\Vert_p \\
   &\le C  \int_{0}^{t} \left\Vert \int_{\R^d} |p(t-s,\cdot-y)|(a_1(y)+ |\pi_n u(s,y)|)  \ud y \right\Vert_p \ud s.
\end{align*}
Then, using Young's convolution inequality and the above estimate, we have
\begin{align*}
  \Vert (\mathcal T_1 u)(t,\cdot)\Vert_p & \le C  \int_{0}^{t} \left(\int_{\R^d} |p(t-s,x)|\ud x \right)  \left\Vert a_1(\cdot)+ |\pi_n u(s,\cdot)|\right\Vert_p \ud s.
\end{align*}
Since \eqref{e:con-mainth2} implies \eqref{e-con-fs}, we can apply Lemma \ref{le:upb-fmsl} to get
\begin{align}\label{e:j1-infty}
  \Vert (\mathcal T_1 u)(t,\cdot)\Vert_p & \le  C  \int_{0}^{t} (t-s)^{\beta+\gamma-1}  \ud s \times(\left\Vert a_1(\cdot)\right\Vert_p+n)<\infty,
\end{align}
due to $\beta+\gamma-1>-1$.

Next, using Minkowski inequality twice, we have
\begin{align*}
  \Vert (\mathcal T_2 u)(t,\cdot)\Vert_p &=\left\Vert \sum_{j=1}^{d} \int_{0}^{t} \int_{\R^d} \left[\frac{\partial}{\partial y_j } p(t-s,\cdot-y)\right]  q_j(s,y,\pi_n u(s, y))\ud s\ud y \right\Vert_p\\
  &\le \sum_{j=1}^{d} \left\Vert  \int_{0}^{t} \int_{\R^d} \left[\frac{\partial}{\partial y_j } p(t-s,\cdot-y)\right]  q_j(s,y,\pi_n u(s, y))\ud s\ud y \right\Vert_p\\
  &\le \sum_{j=1}^{d}   \int_{0}^{t} \left\Vert\int_{\R^d} \left[\frac{\partial}{\partial y_j } p(t-s,\cdot-y)\right]  q_j(s,y,\pi_n u(s, y)) \ud y\right\Vert_p \ud s.
\end{align*}
By Young's convolution inequality and \eqref{e:con-f-p}, we get
\begin{align*}
  \Vert (\mathcal T_2 u)(t,\cdot)\Vert_p & \le \sum_{j=1}^{d} \int_{0}^{t} \left(\int_{\R^d} \left|\frac{\partial}{\partial x_j } p(t-s,x) \right|\ud x\right)
  \left\Vert  q_j(s,\cdot,\pi_n u(s, \cdot))\right\Vert_p   \ud s\\
  &\le C \sum_{j=1}^{d} \int_{0}^{t} \left(\int_{\R^d} \left|\frac{\partial}{\partial x_j } p(t-s,x) \right|\ud x\right)
  \left\Vert a_1(\cdot)+ |\pi_n u(s,\cdot)|\right\Vert_p   \ud s.
\end{align*}
Note that \eqref{e:con-mainth2} implies \eqref{e-con-fs-partial}. Then,  we can apply Lemma \ref{le:upb-fms-deri} to see that
\begin{align*}
   \Vert (\mathcal T_2 u)(t,\cdot)\Vert_p & \le C \int_{0}^{t} (t-s)^{\beta+\gamma-1-\beta/\alpha}  \ud s \times(\left\Vert a_1(\cdot)\right\Vert_p+n)<\infty,
\end{align*}
where the above integral in finite due to the fact that \eqref{e:con-mainth2} implies $\beta+\gamma-1-\beta/\alpha>-1$.

Now, we treat $(\mathcal T_3 u)(t,x)$. By Fubini's theorem and  Proposition \ref{prop:boundpoisson}, we obtain that for all $t\in[0,T]$
\begin{align*}
   & \E\left[\left\Vert(\mathcal T_3 u)(t,x)\right\Vert^p_p \right]\\
   &=\int_{\R^d} \E\left[\left|\int_{0}^{t}\int_{\R^d}\int_E p(t-s,x-y) h(s,y,\pi_n u(s,y);\xi)M(\ud s,\ud y,\ud\xi) \right|^p \right]\ud x\\
   &\le C \E\left[\int_{\R^d}\int_{0}^{t}\int_{\R^d}\int_E \left|p(t-s,x-y) h(s,y,\pi_n u(s,y);\xi)\right|^p \ud y\ud s\mu(\ud\xi)\ud x \right].
\end{align*}
Then, using \eqref{e:con-sigma-h-p} and Lemma \ref{le:upb-fmsl}, we have
\begin{align*}
   & \int_{\R^d}\int_{0}^{t}\int_{\R^d}\int_E \left|p(t-s,x-y) h(s,y,\pi_n u(s,y);\xi)\right|^p \ud y\ud s\mu(\ud\xi)\ud x\\
   &\le C \int_{\R^d}\int_{0}^{t}\int_{\R^d} \left|p(t-s,x-y)\right|^p \left(a_2(y)+\left|\pi_n u(s,y) \right|^p\right)\ud y\ud s\ud x\\
   &\le C \int_{0}^{t}\int_{\R^d} |t-s|^{p(\beta+\gamma-1)+\frac{\beta d}{\alpha}(1-p)}\left(a_2(y)+\left|\pi_n u(s,y) \right|^p\right)\ud y\ud s\\
   &\le C \int_{0}^{t}|t-s|^{p(\beta+\gamma-1)+\frac{\beta d}{\alpha}(1-p)} \left(\Vert a\Vert_1 +n^p\right)\ud s.
\end{align*}
Then, for any $t\in[0,T]$, we have
\begin{align}\label{e:j3-infty}
   \E\left[\left\Vert(\mathcal T_3 u)(t,x)\right\Vert^p_p\right] & \le C \int_{0}^{t}|t-s|^{p(\beta+\gamma-1)+\frac{\beta d}{\alpha}(1-p)} \left(\Vert a\Vert_1+n^p\right)\ud s \notag\\
   &\le C t^{1+p(\beta+\gamma-1)+\frac{\beta d}{\alpha}(1-p)}<\infty,
\end{align}
where the last inequality holds due to the fact that \eqref{e:con-mainth2} implies
\begin{equation*}
  p(\beta+\gamma-1)+\frac{\beta d}{\alpha}(1-p)>-1.
\end{equation*}
Therefore, combining \eqref{e:de-cT}, \eqref{e:esti-j0}, \eqref{e:j1-infty} and \eqref{e:j3-infty}, we confirm that  $\mathcal T$ maps $B_{T,p}$ into itself which completes Step 1.

{\em Step 2.} We will apply the Banach fixed point argument to obtain the existence and uniqueness of the solution to \eqref{e:de-solution2}. Let $\kappa>0$ be an arbitrary fixed number. Let $B_{p,\kappa}$ be the space consisting of all $\cF_t$-adapted process $u:[0,T]\times\R^d\mapsto u(t,x)\in\R$ with norm
\begin{equation}\label{e:de=norm2}
  \Vert u\Vert_{p,\kappa}:=\sup_{t\in[0,T]} e^{-\kappa t}\E\left[\left\Vert u(t,\cdot)\right\Vert^p_{p}\right]^{1/p}<\infty.
\end{equation}
Note that the new norm $\Vert u\Vert_{p,\kappa}$ is equivalent to the previous norm $\Vert u\Vert_{T,p}$ for any fixed $\kappa>0$. Then, we can prove that $\mathcal T :B_{p,\kappa} \to B_{p,\kappa}$ is well defined by the arguments in Step 1. Next, we aim to show that the mapping $\mathcal{T}$ is a contraction for sufficiently large $\kappa$. More precisely, for any $u,v\in B_{p,\kappa}$, there exists a constant $c\in(0,1)$ such that
\begin{equation*}
  \Vert \mathcal{T}u-\mathcal{T}v\Vert_{p,\kappa}\le c \Vert u-v\Vert_{p,\kappa}.
\end{equation*}

From \eqref{e:con-f2-p} and Minkowski inequality, we get
\begin{align*}
   & \Vert \mathcal{T}_1 u-\mathcal{T}_1 v\Vert_{p,\kappa}^p\\
   &=\sup_{t\in[0,T]} e^{-\kappa pt}\E\bigg[\left\Vert \int_{0}^{t}\int_{\R^d} |p(t-s,\cdot-y)| \left[ f(s,y,\pi_n u(s,y))-f(s,y,\pi_n v(s,y)) \right]\ud s \ud y \right\Vert_p^p \bigg]\\
   &\le C\sup_{t\in[0,T]} e^{-\kappa pt}\E\bigg[\bigg\Vert \int_{0}^{t}\int_{\R^d} |p(t-s,\cdot-y)| \left[a_3(y)+C(|\pi_n u(s,y)| +|\pi_n v(s,y)| \right]\\
   &\qquad\qquad\qquad\qquad\qquad\qquad\qquad\qquad\qquad\qquad\qquad\times |\pi_n u(s,y)-\pi_n v(s,y)| \ud s \ud y \bigg\Vert_p^p\bigg]\\
   &\le C\sup_{t\in[0,T]} e^{-\kappa pt}\E\bigg[\bigg(\int_{0}^{t}\bigg\Vert \int_{\R^d} |p(t-s,\cdot-y)| \left[a_3(y)+C(|\pi_n u(s,y)| +|\pi_n v(s,y)| )\right]\\
   &\qquad\qquad\qquad\qquad\qquad\qquad\qquad\qquad\qquad\qquad\qquad\times |\pi_n u(s,y)-\pi_n v(s,y)| \ud y \bigg\Vert_p \ud s\bigg)^p\bigg] .
\end{align*}
Then, using Young's convolution inequality and the above estimate, we have
\begin{align*}
   & \Vert \mathcal{T}_1 u-\mathcal{T}_1 v\Vert_{p,\kappa}^p\\
   &\le C\sup_{t\in[0,T]} e^{-\kappa pt}\E\bigg[\bigg(\int_{0}^{t} \left(\int_{\R^d} |p(t-s,x)| \ud x\right)\\
    & \times\bigg\Vert\left[a_3(\cdot)+C(|\pi_n u(s,\cdot)| +|\pi_n v(s,\cdot)| )\right] |\pi_n u(s,\cdot)-\pi_n v(s,\cdot)|\bigg\Vert_p  \ud s\bigg)^p\bigg] .
\end{align*}
Under \eqref{e-con-fs}, we can apply Lemma \ref{le:upb-fmsl} to get
\begin{align*}
  & \Vert \mathcal{T}_1 u-\mathcal{T}_1 v\Vert_{p,\kappa}^p\\
   &\le C\sup_{t\in[0,T]} e^{-\kappa pt}\E\bigg[\bigg(\int_{0}^{t} (t-s)^{\beta+\gamma-1}\\
    & \times\bigg\Vert\left[a_3(\cdot)+C(|\pi_n u(s,\cdot)|+|\pi_n v(s,\cdot)|)\right] |\pi_n u(s,\cdot)-\pi_n v(s,\cdot)|\bigg\Vert_p  \ud s\bigg)^p\bigg]
\end{align*}
Applying \eqref{e:pin-contract} and Jensen's inequality, we have
\begin{align*}
  & \Vert \mathcal{T}_1 u-\mathcal{T}_1 v\Vert_{p,\kappa}^p\\
   &\le C\sup_{t\in[0,T]} e^{-\kappa pt}\E\bigg[\bigg(\int_{0}^{t} (t-s)^{\beta+\gamma-1}\left\Vert u(s,\cdot)- v(s,\cdot)\right\Vert_p  \ud s\bigg)^p\bigg] \times\left(\Vert a_3(\cdot)\Vert_p^p+2n^p\right)\\
   &\le C\sup_{t\in[0,T]} e^{-\kappa pt}\int_{0}^{t} (t-s)^{p(\beta+\gamma-1)} \E\left[\left\Vert u(s,\cdot)- v(s,\cdot)\right\Vert_p^p\right]  \ud s \\
   &= C \sup_{t\in[0,T]} \int_{0}^{t}e^{-\kappa p(t-s)} (t-s)^{p(\beta+\gamma-1)} e^{-\kappa ps}\E\left[\left\Vert u(s,\cdot)- v(s,\cdot)\right\Vert_p^p\right]  \ud s\\
   &\le C\Vert u-v\Vert_{p,\kappa}^p  \int_{0}^{T}e^{-\kappa ps} s^{p(\beta+\gamma-1)} \ud s.
\end{align*}

Next, using Minkowski inequality, we have
\begin{align*}
   & \Vert \mathcal{T}_2 u-\mathcal{T}_2 v\Vert_{p,\kappa}^p\\
   &= \sup_{t\in[0,T]} e^{-\kappa pt} \E\bigg[\bigg\Vert \sum_{j=1}^{d} \int_{0}^{t} \int_{\R^d} \left(\frac{\partial}{\partial y_j } p(t-s,\cdot-y)\right) \\
   &\qquad \times \left[ q_j(s,y,\pi_n u(s, y))-q_j(s,y,\pi_n v(s, y)) \right] \ud s\ud y \bigg\Vert_p^p\bigg]\\
   &\le  \sup_{t\in[0,T]} e^{-\kappa pt} \E\bigg[ \bigg(\sum_{j=1}^{d}   \int_{0}^{t} \bigg\Vert\int_{\R^d} \left[\frac{\partial}{\partial y_j } p(t-s,\cdot-y)\right] \\
    &\qquad \times \left[ q_j(s,y,\pi_n u(s, y))-q_j(s,y,\pi_n v(s, y)) \right] \ud y\bigg\Vert_p \ud s\bigg)^p\bigg].
\end{align*}
Applying Young's convolution inequality and \eqref{e:con-f2-p} to obtain that
\begin{align*}
  & \Vert \mathcal{T}_2 u-\mathcal{T}_2 v\Vert_{p,\kappa}^p\\
   &\le \sup_{t\in[0,T]} e^{-\kappa pt} \E\bigg[ \bigg( \sum_{j=1}^{d} \int_{0}^{t} \left(\int_{\R^d} \left|\frac{\partial}{\partial x_j } p(t-s,x) \right|\ud x\right)\\
  &\qquad\times \left\Vert  q_j(s,y,\pi_n u(s, y))-q_j(s,y,\pi_n v(s, y))\right\Vert_p   \ud s\bigg)^p\bigg] \\
  &\le \sup_{t\in[0,T]} e^{-\kappa pt} \E\bigg[ \bigg( \sum_{j=1}^{d} \int_{0}^{t} \left(\int_{\R^d} \left|\frac{\partial}{\partial x_j } p(t-s,x) \right|\ud x\right)\\
  &\qquad\times \bigg\Vert\left[a_3(\cdot)+C(|\pi_n u(s,\cdot)| +|\pi_n v(s,\cdot)| )\right] |\pi_n u(s,\cdot)-\pi_n v(s,\cdot)|\bigg\Vert_p   \ud s\bigg)^p\bigg] .
\end{align*}
By Lemma \ref{le:upb-fms-deri}, we have
\begin{align*}
  & \Vert \mathcal{T}_2 u-\mathcal{T}_2 v\Vert_{p,\kappa}^p\\
   &\le C\sup_{t\in[0,T]} e^{-\kappa pt} \E\bigg[ \bigg( \int_{0}^{t} (t-s)^{\beta+\gamma-1-\beta/\alpha}\\
  &\qquad\times \bigg\Vert\left[a_3(\cdot)+C(|\pi_n u(s,\cdot)| +|\pi_n v(s,\cdot)| )\right] |\pi_n u(s,\cdot)-\pi_n v(s,\cdot)|\bigg\Vert_p   \ud s\bigg)^p\bigg] .
\end{align*}
Applying \eqref{e:pin-contract} and H\"older's inequality with $q=\frac{p}{p-1}>1$, we have
\begin{align*}
   & \Vert \mathcal{T}_2 u-\mathcal{T}_2 v\Vert_{p,\kappa}^p\\
   &\le C\sup_{t\in[0,T]} e^{-\kappa pt} \E\bigg[ \bigg( \int_{0}^{t} (t-s)^{\beta+\gamma-1-\beta/\alpha} \Vert  u(s,\cdot)- v(s,\cdot) \Vert_p   \ud s\bigg)^p\bigg] \times  \left(\Vert a_3(\cdot)\Vert_p^p+2n^p \right)\\
   &\le C\sup_{t\in[0,T]} e^{-\kappa pt} \E\bigg[ \bigg(\int_{0}^{t} (t-s)^{(\beta+\gamma-1-\beta/\alpha)/p+(\beta+\gamma-1-\beta/\alpha)/q } \Vert  u(s,\cdot)- v(s,\cdot) \Vert_p   \ud s\bigg)^p\bigg] \\
   &\le C\sup_{t\in[0,T]} e^{-\kappa pt} \E\bigg[ \int_{0}^{t} (t-s)^{\beta+\gamma-1-\beta/\alpha} \Vert  u(s,\cdot)- v(s,\cdot) \Vert_p^p   \ud s\bigg]\times \bigg(\int_{0}^{t} t^{\beta+\gamma-1-\beta/\alpha }\ud s\bigg)^{q/p}.
\end{align*}
Since \eqref{e:con-mainth2} implies $\beta+\gamma-1-\beta/\alpha>-1$, we have
\begin{align*}
   &  \Vert \mathcal{T}_2 u-\mathcal{T}_2 v\Vert_{p,\kappa}^p\\
   &\le C \sup_{t\in[0,T]} \int_{0}^{t}e^{-\kappa p(t-s)} (t-s)^{\beta+\gamma-1-\beta/\alpha} e^{-\kappa ps}\E\left[\left\Vert u(s,\cdot)- v(s,\cdot)\right\Vert_p^p\right]  \ud s\\
   &\le C\Vert u-v\Vert_{p,\kappa}^p  \int_{0}^{T}e^{-\kappa ps} s^{\beta+\gamma-1-\beta/\alpha} \ud s.
\end{align*}

Then, by Fubini's theorem and  Proposition \ref{prop:boundpoisson}, we have
\begin{align*}
   & \Vert \mathcal{T}_3 u-\mathcal{T}_3 v\Vert_{p,\kappa}^p\\
   &=\sup_{t\in[0,T]} e^{-\kappa pt}\int_{\R^d} \ud x\\
   &\quad\times \E\left[\left|\int_{0}^{t}\int_{\R^d}\int_E p(t-s,x-y) [h(s,y,\pi_n u(s,y);\xi)-h(s,y,\pi_n v(s,y);\xi)] M(\ud s,\ud y,\ud\xi) \right|^p \right]\\
   &\le C \sup_{t\in[0,T]} e^{-\kappa pt}\int_{\R^d} \ud x\\
   &\quad\times\E\left[\int_{0}^{t}\int_{\R^d}\int_E \left|p(t-s,x-y) [h(s,y,\pi_n u(s,y);\xi)-h(s,y,\pi_n v(s,y);\xi)]\right|^p \ud y\ud s\mu(\ud\xi) \right].
\end{align*}
Then, using \eqref{e:con-h2-p}, we have
\begin{align*}
   & \Vert \mathcal{T}_3 u-\mathcal{T}_3 v\Vert_{p,\kappa}^p\\
   &\le C \sup_{t\in[0,T]} e^{-\kappa pt}\int_{\R^d}\E\left[\int_{0}^{t}\int_{\R^d}\big|p(t-s,x-y) [\pi_n u(s,y)-\pi_n v(s,y)]\big|^p \ud y\ud s\right] \ud x.
\end{align*}
 Applying the Young's inequality for integral operators corresponding to $x,y$ to obtain that
 \begin{align*}
   & \Vert \mathcal{T}_3 u-\mathcal{T}_3 v\Vert_{p,\kappa}^p\\
   &\le C \sup_{t\in[0,T]} e^{-\kappa pt}\E\left[\int_{0}^{t}\ud s \times\int_{\R^d}\left|p(t-s,x)\right|^p \ud x \times\int_{\R^d}\left| \pi_n u(s,y)-\pi_n v(s,y)\right|^p \ud y \right].
 \end{align*}
 By Lemma \ref{le:upb-fmsl} and \eqref{e:pin-contract}, we have
 \begin{align*}
   & \Vert \mathcal{T}_3 u-\mathcal{T}_3 v\Vert_{p,\kappa}^p\\
   &\le C \sup_{t\in[0,T]} e^{-\kappa pt} \int_{0}^{t}|t-s|^{p(\beta+\gamma-1)+\frac{\beta d}{\alpha}(1-p)} \E\left[ \left\Vert u(s,\cdot)-v(s,\cdot)\right\Vert_p^p \right]\ud s\\
   &= C \sup_{t\in[0,T]}  \int_{0}^{t}e^{-\kappa p(t-s)}|t-s|^{p(\beta+\gamma-1)+\frac{\beta d}{\alpha}(1-p)} e^{-\kappa ps} \E\left[\left\Vert u(s,\cdot)-v(s,\cdot)\right\Vert_p^p \right]\ud s\\
   &\le C \Vert u-v\Vert_{p,\kappa}^p \int_{0}^{T}e^{-\kappa ps}s^{p(\beta+\gamma-1)+\frac{\beta d}{\alpha}(1-p)}\ud s.
 \end{align*}
 Since condition \eqref{e:con-mainth2} holds, we can choose a sufficiently large $\kappa>0$ such that
 \begin{equation*}
   C\int_{0}^{T}e^{-\kappa ps} \left(s^{p(\beta+\gamma-1)}+s^{\beta+\gamma-1-\beta/\alpha} +s^{p(\beta+\gamma-1)+\frac{\beta d}{\alpha}(1-p)}\right) \ud s<1.
 \end{equation*}
We have confirmed that the mapping $\mathcal{T}:B_{p,\kappa} \to B_{p,\kappa}$ is a contraction, and hence, $\mathcal{T}$ has a unique fixed point $u(t,x)$ in $B_{p,\kappa}$ which is the solution to the truncated stochastic integral equation \eqref{e:de-truncatedSIE} for any given $n\in\N$.

{\em Step 3.} Here we will construct a local solution to \eqref{e:de-solution2}. We denote by $u_n$ the unique solution to \eqref{e:de-truncatedSIE} and define the $\cF_t$ stopping time
\begin{equation*}
  \tau_n:=\inf\left\{ t\in[0,T]:\left\Vert u_n(t,\cdot)\right\Vert_p \ge n \right\}.
\end{equation*}
It is clear that $\{\tau_n\}_{n\in\N}$ is an increasing sequence and hence the limit $\tau_\infty:=\lim_{n\to\infty}\tau_n$ exists. By the construction of the unique solution $u_n$, we have that for any $m>n$
\begin{equation*}
  u_m(t,x)=u_n(t,x), \quad t<\tau_n,\:x\in\R^d  \text{ a.s.}
\end{equation*}
We define
\begin{equation*}
  u(t,x):=u_n(t,x), \quad (t,x)\in [0,\tau_n)\times\R^d,
\end{equation*}
and then the local solution to \eqref{e:de-solution2} is
\begin{equation*}
  \{u(t,x):(t,x)\in[0,\tau_\infty)\times\R^d\}.
\end{equation*}

 Then we prove the uniqueness to finish this step. Suppose that there are two local solution $u$ and $v$ to Eq. \eqref{e:de-solution2}. Thus, $u$ and $v$ must satisfy Eq. \eqref{e:de-truncatedSIE}for any fixed $n\in\N$. On the other hand, by the uniqueness of the solution to Eq. \eqref{e:de-truncatedSIE}, we have $u(t,x)=v(t,x)$,
 for all $(t,x)\in[0,\tau_n)\times\R^d$. Now, let $n\to\infty$, we get $u(t,x)=v(t,x)$,
  for all $(t,x)\in[0,\tau_\infty)\times\R^d$. Hence, we confirm the uniqueness.

{\em Step 4.} Here, we will show that there exists a predictable modification for the solution $u(t)$. By \cite[Proposition 3.21]{Peszat2007Stochastic}, we know that any measurable stochastically continuous $\cF_t$-adapted process has a predictable modification. Thus, it is sufficient to show that
\begin{align*}
  \lim_{t'\downarrow t}\E\bigg[\bigg| & \int_{0}^{t'}\int_{\R^d}\int_E p(t'-s,x-y) h\left(s,y,u(s,y);\xi\right) M(\ud s,\ud y,\ud\xi) \\
       &-\int_{0}^{t}\int_{\R^d}\int_E p(t-s,x-y)h\left(s,y,u(s,y);\xi\right) M(\ud s,\ud y,\ud\xi)\bigg|^p\bigg]=0.
\end{align*}
By dominated convergence theorem, it is sufficient to prove that
\begin{equation}\label{e:sthca-contin}
  \begin{split}
     \int_{\R^d}\E\bigg[\bigg| & \int_{0}^{t'}\int_{\R^d}\int_E p(t'-s,x-y) h\left(s,y,u(s,y);\xi\right) M(\ud s,\ud y,\ud\xi) \\
       &-\int_{0}^{t}\int_{\R^d}\int_E p(t-s,x-y)h\left(s,y,u(s,y);\xi\right) M(\ud s,\ud y,\ud\xi)\bigg|^p\bigg]\ud x
  \end{split}
\end{equation}
 converges to 0 when $t'\downarrow t$. Using Proposition \ref{prop:boundpoisson}, \eqref{e:con-sigma-h-p} and Fubini's theorem, we have \eqref{e:sthca-contin} is bounded above by
\begin{align*}
   I_1+I_2:=&C\E\left[\int_{\R^d}\int_{0}^{t}\int_{\R^d}\left|p(t'-s,x-y)-p(t-s,x-y)\right|^p\left(a_2(y)+|u(s,y)|^p\right)\ud y\ud s\ud x\right]\\
   &+C\E\left[\int_{\R^d}\int_{t}^{t'}\int_{\R^d}\left|p(t'-s,x-y)\right|^p\left(a_2(y)+|u(s,y)|^p\right)\ud y\ud s\ud x\right].
\end{align*}
For the second term $I_2$, using Lemma \ref{le:upb-fmsl}, we have
\begin{align*}
  I_2 &\le C\int_{t}^{t'}\int_{\R^d} |t-s|^{p(\beta+\gamma-1)+\frac{\beta d}{\alpha}(1-p)}\left(a_2(y)+\left| u(s,y) \right|^p\right)\ud y\ud s\\
   &\le C \int_{t}^{t'}|t-s|^{p(\beta+\gamma-1)+\frac{\beta d}{\alpha}(1-p)}\ud s\times
   \left(\Vert a_2(\cdot)\Vert_1 +\sup_{r\in[0,T]}\E\left[\left\Vert u(r,\cdot)\right\Vert^p_p \right]\right),
\end{align*}
which converges to 0 when $t'\downarrow t$ since the condition \eqref{e:con-mainth2} holds. For the first term $I_1$, the integrand is bounded above by
\begin{equation*}
  2^{p-1}\left(\left|p(t'-s,x-y)\right|^p+\left|p(t-s,x-y)\right|^p\right) \left(a_2(y)+|u(s,y)|^p\right),
\end{equation*}
 and then we also have
\begin{align*}
  I_1 &\le C\E\left[\int_{\R^d}\int_{0}^{t}\int_{\R^d} 2^{p-1}\left(\left|p(t'-s,x-y)\right|^p+\left|p(t-s,x-y)\right|^p\right) \left(a_2(y)+|u(s,y)|^p\right)\ud y\ud s\ud x\right]\\
  &\le C2^{p-1}\int_{0}^{t}\left(|t'-s|^{p(\beta+\gamma-1)+\frac{\beta d}{\alpha}(1-p)}+|t-s|^{p(\beta+\gamma-1)+\frac{\beta d}{\alpha}(1-p)} \right)\ud s\\
  &\qquad\qquad\qquad\times \left(\Vert a_2(\cdot)\Vert_1 +\sup_{r\in[0,T]}\E\left[\left\Vert u(r,\cdot)\right\Vert^p_p \right]\right)<\infty.
\end{align*}
Thus, by  dominated convergence theorem, we have
\begin{align}\label{finalEq}
  &\lim_{t'\downarrow t}  \E\left[\int_{\R^d}\int_{0}^{t}\int_{\R^d}\left|p(t'-s,x-y)-p(t-s,x-y)\right|^p\left(f(y)+|u(s,y)|^p\right)\ud y\ud s\ud x\right]\nonumber\\
  &=\E\left[\int_{\R^d}\int_{0}^{t}\int_{\R^d}\lim_{t'\downarrow t}\left|p(t'-s,x-y)-p(t-s,x-y)\right|^p\left(f(y)+|u(s,y)|^p\right)\ud y\ud s\ud x\right]=0.
\end{align}
Therefore, we have proved  the solution has a predictable modification.

For the case of $p=1$, one can derive directly through the above steps with the corresponding key inequality \eqref{e:j3-infty} and other relevant inequalities (without utilising Young’s convolution inequality and H\"older's inequality),  as well as the final limit equality \eqref{finalEq} and so on.
Hence, Theorem \ref{th:purejump} does valid for $p=1$. The proof is then complete.
\end{proof}

Next, we prove theorem \ref{th:1}.

\begin{proof}[Proof of Theorem \ref{th:1}]
  For each $n\in\N$, let $\lambda_n$ be the mapping from $L^2(\R^d)$ to $L^2(\R^d)$, i.e.,
\begin{equation*}
  \lambda_n u(t,x)=
  \begin{cases}
    u(t,x), & \mbox{if } \Vert u\Vert_2 \le n \\
    \frac{n}{\Vert u(t,\cdot)\Vert_2} u(t,x), & \mbox{if } \Vert u\Vert_2 \ge n.
  \end{cases}
\end{equation*}
Clearly, we have $\lambda_n u\le n$ and the mapping $\lambda_n:L^2(\R^d)\to L^2(\R^d)$ is Lipschitz continuous, i.e.,
\begin{equation}\label{e:lambda-contract}
  \left\Vert\lambda_n u-\lambda_n v\right\Vert_2 \le \left\Vert u-v\right\Vert_2,\quad \forall u,v\in L^2(\R^d),
\end{equation}
which implies that $\lambda_n$ is also a contraction.

For any fixed $n$, the truncated stochastic integral equation associated with $\lambda_n$ is
\begin{equation}\label{e:truncated-white}
\begin{split}
   u(t,x)= & J_0(t,x)+ \int_{0}^{t}\int_{\R^d} p(t-s,x-y)f(s,y,\lambda_n u(s,y)) \ud s \ud y\\
   &+\sum_{j=1}^{d} \int_{0}^{t} \int_{\R^d} \left[\frac{\partial}{\partial y_j } p(t-s,x-y)\right]  q_j(s,y,\pi_n u(s, y))\ud s\ud y\\
        &+ \int_{0}^{t}\int_{\R^d} p(t-s,x-y)\sigma(s,y,\lambda_n u(s,y)) W(\ud s,\ud y)\\
        &+ \int_{0}^{t}\int_{\R^d}\int_{E} p(t-s,x-y) h(s,y,\lambda_n u(s,y);\xi)M(\ud s,\ud y,\ud\xi).
\end{split}
\end{equation}

Recall that the Banach space $B_{T,2}$ with norm $\Vert \cdot\Vert_{T,2}$ is defined in \eqref{e:de-norm1}. Now we define a mapping $\mathcal J$ on $B_{T,2}$:
\begin{equation*}
  (\mathcal J u)(t,x) := J_0(t,x) + \sum_{i=1}^{4}(\mathcal J_i u)(t,x),
\end{equation*}
with
\begin{align*}
  (\mathcal J_1 u)(t,x) & =  \int_{0}^{t}\int_{\R^d} p(t-s,x-y)f(s,y,\lambda_n u(s,y)) \ud s \ud y, \\
  (\mathcal J_2 u)(t,x) & = \sum_{j=1}^{d} \int_{0}^{t} \int_{\R^d} \left[\frac{\partial}{\partial y_j } p(t-s,x-y)\right]  q_j(s,y,\pi_n u(s, y))\ud s\ud y,\\
  (\mathcal J_3 u)(t,x) & = \int_{0}^{t}\int_{\R^d}p(t-s,x-y)\sigma(s,y,\lambda_n u(s,y))W(\ud s,\ud y), \\
  (\mathcal J_4 u)(t,x) & = \int_{0}^{t}\int_{\R^d}\int_{E} p(t-s,x-y) h(s,y,\lambda_n u(s,y);\xi)M(\ud s,\ud y,\ud\xi).
\end{align*}
We first need to confirm that the mapping $\mathcal J$ maps $B_{T,2}$ into itself. By a similar argument in step 1 in the proof of Theorem \ref{th:purejump}, one can deal with $J_0$, $\mathcal J_1$, $\mathcal J_2$ and $\mathcal J_4$. And then, it is sufficient to show that $\mathcal J_3:B_{T,2}\mapsto B_{T,2}$. By It\^o isometry property and \eqref{e:con-sigma-h}, we have
\begin{align*}
   \E\left[\Vert (\mathcal J_3 u)(t,\cdot)\Vert_2^2\right]
   &=\E\left[\int_{0}^{t}\int_{\R^d}\int_{\R^d} \left|p(t-s,x-y)\sigma(s,y,\lambda_n u(s,y))\right|^2 \ud s \ud y \ud x \right]\\
   &=\E\left[\int_{0}^{t}\int_{\R^d}\int_{\R^d} \left|p(t-s,x-y)\right|^2 \left(g_2(y)+C|\lambda_n u(s,y)|^2\right) \ud s \ud y \ud x \right].
\end{align*}
Noting that condition \eqref{e:con-th1} implies $2\alpha>d$, we can apply Lemma \ref{le:upb-fmsl} to see that
\begin{align*}
  \E\left[\Vert (\mathcal J_3 u)(t,\cdot)\Vert_2^2\right]
   &\le\E\left[\int_{0}^{t} (t-s)^{2\beta+2\gamma-2-\frac{\beta d}{\alpha}} \int_{\R^d}  \left(g_2(y)+C|\lambda_n u(s,y)|^2\right)\ud y \ud s   \right]\\
   &\le \int_{0}^{t} (t-s)^{2\beta+2\gamma-2-\frac{\beta d}{\alpha}} \ud s \times \left(\Vert g_2(\cdot)\Vert_1+Cn^2\right)<\infty,
\end{align*}
where the integral with respect to $s$ is finite due to the fact that \eqref{e:con-th1} implies
\begin{equation*}
  2\beta+2\gamma-2-\frac{\beta d}{\alpha}>-1.
\end{equation*}
This prove that $\mathcal J$ maps $B_{T,2}$ into itself.

Recall that the Banach space $(B_{2,\kappa}, \Vert \cdot\Vert_{2,\kappa})$ is defined in \eqref{e:de=norm2}. The new norm $\Vert \cdot\Vert_{2,\kappa}$ is equivalent to the previous norm $\Vert \cdot\Vert_{T,2}$ for any fixed $\kappa>0$. Next, we aim to show that the mapping $\mathcal{J}:B_{2,\kappa}\mapsto B_{2,\kappa}$ is a contraction, i.e., there exists a constant $c\in(0,1)$ such that
\begin{equation*}
  \Vert \mathcal{J}u-\mathcal{J}v\Vert_{2,\kappa}\le c \Vert u-v\Vert_{2,\kappa},
\end{equation*}
for any $u,v\in B_{2,\kappa}$.

By It\^o isometry property and \eqref{e:con-sigma-h}, we have
\begin{align*}
   &\Vert \mathcal{J}_3 u-\mathcal{J}_3 v\Vert_{2,\kappa}=\sup_{t\in[0,T]} e^{-\kappa t} \E\left[\Vert \mathcal J_3 u - \mathcal J_3 v\Vert_2^2\right]^{1/2}\\
   &=\sup_{t\in[0,T]} e^{-\kappa t} \E\left[\int_{0}^{t}\int_{\R^d}\int_{\R^d} \left|p(t-s,x-y)[\sigma(s,y,\lambda_n u(s,y))-\sigma(s,y,\lambda_n v(s,y))]\right|^2 \ud s \ud y \ud x \right]^{1/2}\\
   &\le C\sup_{t\in[0,T]} e^{-\kappa t} \E\left[\int_{0}^{t}\int_{\R^d}\int_{\R^d} \left|p(t-s,x-y)\right|^2 \times |\lambda_n u(s,y)-\lambda_n v(s,y)|^2 \ud s \ud y \ud x \right]^{1/2}.
\end{align*}
Under condition \eqref{e:con-th1}, we can apply Lemma \ref{le:upb-fmsl} to see that
\begin{align*}
   &\Vert \mathcal{J}_3 u-\mathcal{J}_3 v\Vert_{2,\kappa}\\
   &\le C\sup_{t\in[0,T]} e^{-\kappa t} \E\left[\int_{0}^{t} (t-s)^{2\beta+2\gamma-2-\frac{\beta d}{\alpha}} \int_{\R^d}  |\lambda_n u(s,y)-\lambda_n v(s,y)|^2 \ud y  \ud s  \right]^{1/2}.
\end{align*}
By \eqref{e:lambda-contract}, we have
\begin{align*}
  &\Vert \mathcal{J}_3 u-\mathcal{J}_3 v\Vert_{2,\kappa}\\
   &\le C\sup_{t\in[0,T]} e^{-\kappa t} \E\left[\int_{0}^{t} (t-s)^{2\beta+2\gamma-2-\frac{\beta d}{\alpha}} \Vert u(s,\cdot)-v(s,\cdot) \Vert_2^2  \ud s  \right]^{1/2}\\
   &= C\sup_{t\in[0,T]} \E\left[\int_{0}^{t} e^{-2\kappa (t-s)} (t-s)^{2\beta+2\gamma-2-\frac{\beta d}{\alpha}} \times  e^{-2\kappa s} \Vert u(s,\cdot)-v(s,\cdot) \Vert_2^2 \ud s  \right]^{1/2}\\
   &\le C \sup_{t\in[0,T]} \left(\int_{0}^{t} e^{-2\kappa (t-s)} (t-s)^{2\beta+2\gamma-2-\frac{\beta d}{\alpha}}  \ud s  \right)^{1/2}\times \Vert u- v\Vert_{2,\kappa},
\end{align*}
where
\begin{equation*}
  C \sup_{t\in[0,T]} \left(\int_{0}^{t} e^{-2\kappa (t-s)} (t-s)^{2\beta+2\gamma-2-\frac{\beta d}{\alpha}}  \ud s  \right)^{1/2}<1,
\end{equation*}
for sufficiently large $\kappa$. Using the similar approach in step 2 in the proof of Theorem \ref{th:purejump}, we can also prove that for some $c<1$,
\begin{equation*}
  \Vert \mathcal{J}_1 u-\mathcal{J}_1 v\Vert_{2,\kappa} \le c \Vert u- v\Vert_{2,\kappa},
\end{equation*}
\begin{equation*}
  \Vert \mathcal{J}_2 u-\mathcal{J}_2 v\Vert_{2,\kappa}\le c \Vert u- v\Vert_{2,\kappa},
\end{equation*}
and
\begin{equation*}
  \Vert \mathcal{J}_4 u-\mathcal{J}_4 v\Vert_{2,\kappa}\le c \Vert u- v\Vert_{2,\kappa}.
\end{equation*}
Therefore, the mapping  $\mathcal{J}:B_{2,\kappa}\mapsto B_{2,\kappa}$ is a contraction. Hence, there must be a unique fixed point in $B_{2,\kappa}$ for $\mathcal{J}$, and this point is the unique solution for Eq. \eqref{e:truncated-white} for each $n\in\N$. For this $u_n$, let us define the stopping time
\begin{equation*}
  \tau_n(\omega):=\inf\left\{ t\in[0,T]:\int_{\R^d}u_n^2(t,x)\ud x\ge n^2 \right\}.
\end{equation*}
Note that $\{\tau_n\}_{n\in\N}$ is an increasing sequence. Clearly by the contraction property of $\mathcal{J}$, we have for all $m\ge n$,
\begin{equation*}
  u_m(t,x)=u_n(t,x), \quad \forall t\in[0,\tau_n),\: x\in\R^d,
\end{equation*}
almost surely. We define
\begin{equation*}
  u(t,x):=u_n(t,x), \quad (t,x)\in [0,\tau_n)\times\R^d,
\end{equation*}
and
\begin{equation*}
  \tau_{\infty}:=\lim_{n\to\infty}\tau_n.
\end{equation*}
Then, the local solution to Eq. \eqref{e:de-solution} is
\begin{equation*}
  \{u(t,x):(t,x)\in[0,\tau_\infty)\times\R^d\}.
\end{equation*}
Moreover, the uniqueness of the local solution $u$ to Eq. \eqref{e:de-solution} is directly follows from the uniqueness of the solution $u_n$ to \eqref{e:truncated-white}.

Using the similar approach, one can also prove that there exists a predictable modification for the solution $u(t)$. This completes the proof.

\end{proof}

\noindent{\bf Acknowledgements.}

This work is supported by the Guangdong Provincial Key Laboratory of Interdisciplinary Research
and Application for Data Science under project No. UIC 2022B1212010006, by the UIC Start-up Research
Fund (No. UICR0700072-24), and by Guangdong and Hong Kong Universities “1+1+1” Joint Research Collaboration
Scheme (project No. : UICR0800012-24 and project No.: UICR0800012-24A).

%\bibliographystyle{plain}
%\bibliographystyle{alpha}
%\bibliography{ref}

\end{document}